\newcommand{\R}{\mathbb R}
\newtheorem{theorem}[section]{Theorem}
\newtheorem{corollary}[section]{Corollary}
\newtheorem{definition}[section]{Definition}
\newtheorem{example}[section]{Example}
\begin{document}
\title{A Vectorization for Nonconvex Set-valued Optimization \thanks{%
Mathematics Subject Classifications (2000): 80M50, 90C26.}
}

\author{Emrah Karaman$^1$, \.{I}lknur Atasever G\"{u}ven\c{c}$^1$, Mustafa Soyertem$^2$ \and Didem Tozkan$^1$, Mahide K\"{u}\c{c}\"{u}k$^1$, Yal\c{c}\i n K\"{u}\c{c}\"{u}k$^1$
}
\author[1]{Emrah Karaman$^1$, \.{I}lknur Atasever G\"{u}ven\c{c}$^1$, Mustafa Soyertem$^2$\thanks{corresponding author: mustafa.soyertem@usak.edu.tr} \and Didem Tozkan$^1$, Mahide K\"{u}\c{c}\"{u}k$^1$, Yal\c{c}\i n K\"{u}\c{c}\"{u}k$^1$}

\maketitle
$^1$
              Anadolu University, Yunus Emre Campus, Faculty of Science, Department of Mathematics, 26470, Eski\c{s}ehir, Turkey \and

              $^2$
              U\c{s}ak University, Bir Eyl\"{u}l Campus, Faculty of Art and Science and Department of Mathematics, 64200, U\c{s}ak, Turkey

\begin{abstract}
Vectorization is a technique that replaces a set-valued optimization problem with a vector optimization problem. In this work, by using an extension of Gerstewitz function \cite{Tammer}, a vectorizing function is defined to replace a given set-valued optimization problem with respect to set less order relation. Some properties of this function are studied. Also, relationships between a set-valued optimization problem and a vector optimization problem, derived via vectorization of this set-valued optimization problem, are examined. Furthermore, necessary and sufficient optimality conditions are presented without any convexity assumption.

\end{abstract}

\textbf{Keywords:} Set-valued optimization, Nonconvex optimization, Vectorization,

Optimality conditions

\section*{Introduction}
Set-valued optimization, a generalization of vector optimization, has become a popular subject. Because it has many applications in game theory, engineering, control theory, finance, etc. \cite{Aubin1, Chinc, Hamel, Khan, Thompson, Neukel, Polak}. There are three types of solution concepts in set-valued optimization problems: based on vector approach \cite{Bot, Jahn3, Marin, Jahn, Khan, Ehrgott, Soyertem3, Atasever, Luc, Tanino}, based on set optimization approach \cite{Marin, Jahn2, Khan,  Kuroiwa, Kuroiwa1} and based on lattice structure \cite{Khan, Lohne}. In this work, we consider set optimization approach.

Kuroiwa et al. \cite{Kuroiwa1} presented six order relations for sets. Then, set optimization approach was introduced by Kuroiwa \cite{Kuroiwa3}. Later, Jahn and Ha defined new order relations and examined some properties of them \cite{Jahn1}.

There are some tools for solving set-valued optimization problems with respect to set optimization approach. Scalarization is one of them. Recently, some scalarization techniques obtained via Gerstewitz function have been widely used \cite{Molho, Marin, Khan}.

Hern\'{a}ndez and Rodr\'{\i}guez-Mar\'{\i}n examined relationships between solution concepts for vector approach and set optimization approach with respect to lower set less order relation. Moreover, they defined an extension of Gerstewitz function, obtained a nonconvex scalarization and optimality conditions for set-valued optimization problems with respect to lower set less order relation \cite{Marin}. E. Köbis and M. A. Köbis obtained nonconvex scalarizations with respect to several well-known set order relations \cite{Kobis}. Xu and Li presented a scalarization via oriented distance function and obtained optimality conditions for set-valued optimization problems with respect to upper set less order relation \cite{Li3}.

Vectorization is another tool for solving set-valued optimization problems by using vector-valued functions. This method replaces a set-valued optimization problem with a vector optimization problem which can be solved by using known methods such as numerical methods, scalarization etc. \cite{Bot, Ehrgott, Jahn, Luc}. Solutions obtained via these methods are also solutions of the set-valued optimization problem.

Vectorization based on total ordering cones was first introduced by K\"{u}\c{c}\"{u}k et al. \cite{Soyertem, Soyertem2}. They showed that a set-valued optimization problem can be represented as a vector-valued problem. They defined vectorizing function via existence and uniqueness of a minimal element of cone-closed and cone-bounded sets with respect to a total ordering cone. The value of this function at a point is the minimal element of the value of the set-valued map at this point with respect to the total ordering cone.

Another vectorization technique was given by Jahn \cite{Jahn2}. Jahn used linear approximations to define vectorizing function for set-valued optimization problem with respect to set less order relation \cite{Jahn2}. Under some convexity assumptions he gave optimality conditions for set-valued optimization problems with respect to set less order relation.

In the present study, a vectorizing function named Gerstewitz vectorizing function is defined by using an extension of Gerstewitz function to replace a set-valued optimization problem with respect to set less order relation with a vector optimization problem. A nonconvex approach to set-valued maps by using Gerstewitz vectorization is given. Moreover, necessary and sufficient optimality conditions for minimal and weak minimal solution are presented without any convexity assumption. Also, some examples for convex and nonconvex cases are used to demonstrate the usage of Gersewitz vectorization.

The paper is organized as follows: In section 2, we recall basic concepts of the theory of vector optimization and set-valued optimization. In section 3, we introduce Gerstewitz vectorizing function and examine some properties of this function. Moreover, relationships between this function and set less order relation are studied. In the last section, some optimality conditions are presented via Gerstewitz vectorization.

\section*{Preliminaries}
Throughout this paper, $X$ is any nonempty set, $Y$ denotes a real topological linear space ordered by a convex, closed and pointed cone $C\subset Y$ with nonempty interior. $\mathcal{P}_0(Y)$ is the notation of the family of all nonempty subsets of $Y$. Given any set $A\in\mathcal{P}_0(Y)$, $int(A)$ and $cl(A)$ are topological interior and the closure of  $A$, respectively. $\R^2$ is partially ordered by cone $\R^2_+$.

It is known that the cone $C$ induces the following ordering relations on $Y$ for $y,y'\in Y$ $$\begin{array}{ll}
 y\leq_C y' \Longleftrightarrow &  y'-y\in C\\
 y<_C y' \Longleftrightarrow & y'-y\in int(C).
 \end{array}$$
 Let $A\subset Y$ and $a_0\in A$. $a_0$ is a minimal (maximal) point of $A$ with respect to cone  $C$ if $A\cap(a_0-C)=\{a_0\}$ ($A\cap(a_0+C)=\{a_0\}$). The set of all minimal (maximal) points of $A$ is denoted by $\min A$ ($\max A$). Similarly, $a_0$ is a weak minimal (weak maximal) point of $A$ with respect to cone $C$  if $A\cap(a_0-int(C))=\emptyset$ ($A\cap(a_0+int(C))=\emptyset$) and the set of all weak minimal (weak maximal) points of $A$ is denoted by $W\min A$ ($W\max A$).

A vector optimization problem is defined by
   \begin{displaymath}
(VOP) \left\{ \begin{array}{ll}
\min(\max) f(x) & \\
 s.t. \ x\in X &
\end{array} \right.
\end{displaymath}
where $f:X\rightarrow Y$ is a vector valued function.
\begin{definition} \cite{Jahn}
  An element $\bar{x}\in X$ is called a minimal (maximal) solution of $(VOP)$ with respect to cone $C$ iff there isn't any $x\in X$ such that  $$f(x)\leq_Cf(\bar{x})\ \  (f(\bar{x})\leq_Cf(x)) \text{ and } f(x)\neq f(\bar{x}).$$
\end{definition}
\begin{definition}\cite{Jahn}
An element $\bar{x}\in X$ is called a minimal (maximal) strongly solution of $(VOP)$ with respect to cone $C$ iff  $$f(\bar{x})\leq_Cf(x) \ \  (f(x)\leq_Cf(\bar{x})) \ \ \ \text{for all } x\in X.$$
\end{definition}
If $\bar{x}\in X$ is a strongly solution of  $(VOP)$, it is also a solution of $(VOP)$ \cite{Jahn}.

It is said that $A$ is $C$-closed iff $A+C$ is a closed set; $C$-bounded iff for each neighborhood $U$ of zero in $Y$, there exists a positive real number $t$ such that $A\subset tU+C$; $C$-compact iff any cover of the form $\{U_\alpha+C \ | \ U_\alpha \text{ are open }, \alpha\in I\}$ admits a finite subcover. Every $C$-compact set is $C$-closed and $C$-bounded \cite{Luc}.

$A$ is called $\mp C$-bounded if $A\in\mathcal{P}_0(Y)$ is  $C$-bounded and $-C$-bounded in $Y$; if $A$ is $C$-closed and $-C$-closed, $A$ is called $\mp C$-closed; if $A$ is $C$-compact and $-C$-compact, $A$ is called $\mp C$-compact set. A set $A\in\mathcal{P}_0(Y)$ is called $C$-proper iff $A+C\neq Y$ and we denote  by $\mathcal{P}_{0C}(Y)$ the family of all $C$-proper subsets of $Y$ \cite{Marin}. A set $A\in\mathcal{P}_0(Y)$ is called $-C$-proper iff $A-C\neq Y$ and we denote by $\mathcal{P}_{0-C}(Y)$ the family of all $-C$-proper subsets of $Y$ \cite{Li3}. $\mathcal{P}^0_{\mp C}(Y)$ denotes the family of $C$-proper and $-C$-proper subsets of $Y$, namely, $\mathcal{P}^0_{\mp C}(Y):=\mathcal{P}_{0C}(Y)\cap\mathcal{P}_{0-C}(Y)$.

 Let $F:X\rightrightarrows Y$ be a set-valued map and $``N"$ denotes some property of a set in $Y$. $F$ is called $N$ valued on $X$ if $F(x)$ has the property $``N"$ for every $x\in X$. For example, if $F(x)$ is closed for all $x\in X$, we say that $F$ is closed valued on $X$.

 Let $F:X\rightrightarrows Y$ be a set-valued map and $F(x)\neq \emptyset$ for all $x\in X$. Set-valued optimization problem is defined by
\begin{displaymath}
(SOP) \left\{ \begin{array}{ll}
\min(\max) F(x) & \\
 s.t. \ x\in X. &
\end{array} \right.
\end{displaymath}

According to vector approach, we are looking for efficient points of the set \linebreak $\displaystyle F(X)=\bigcup_{x\in X}F(x)$ to solve $(SOP)$, that is, $x_0\in X$ is a solution of set-valued optimization problem if $$F(x_0)\cap\min\bigcup_{x\in X}F(x)\neq\emptyset \ \ \Big(F(x_0)\cap\max\bigcup_{x\in X}F(x)\neq\emptyset\Big).$$ When $(SOP)$ is considered according to vector approach, we denote the problem by $(v-SOP)$. Similarly, $x_0\in X$ is a weak solution of $(v-SOP)$ if $$F(x_0)\cap W\min\bigcup_{x\in X}F(x)\neq\emptyset \ \ \Big(F(x_0)\cap W\max\bigcup_{x\in X}F(x)\neq\emptyset\Big).$$

Set optimization approach is based on a comparison among the values of set-valued map \cite{Kuroiwa3}. That is, we are looking for efficient sets of the family \linebreak $\mathcal{F}(X)=\{F(x) \ | \ x\in X\}$ to solve $(SOP)$.

\begin{definition}\cite{Marin, Jahn1, Kuroiwa1, Li3} Let $A,B\in\mathcal{P}_0(Y)$.
\begin{itemize}
\item[(i)] lower set less order relation ($\preceq^{\ell}$) is defined by $A\preceq^{\ell} B\Longleftrightarrow B\subset A+C$,
\item[(ii)] strict lower set less order relation ($\prec^{\ell}$) is defined by $A\prec^{\ell} B\Longleftrightarrow B\subset A+int(C)$,
\item[(iii)] upper set less order relation ($\preceq^u$) is defined by $A\preceq^u B\Longleftrightarrow A\subset B-C$,
  \item[(iv)] strict upper set less order relation ($\prec^u$) is defined by $A\prec^u B\Longleftrightarrow A\subset B-int(C)$,
  \item[(v)] set less order relation ($\preceq^s$) is defined by $A\preceq^s B\Longleftrightarrow A\preceq^{\ell}B \text{ and } A\preceq^uB$,
\item[(vi)] strict set less order relation ($\prec^s$) is defined by $A\prec^s B\Longleftrightarrow A\prec^{\ell}B$ and $A\prec^uB$.
\end{itemize}
\end{definition}
Note that $\preceq^{\ell}$, $\preceq^u$ and $\preceq^s$ order relations are reflexive and transitive on $\mathcal{P}_0(Y)$. There is a relationship between $\preceq^{\ell}$ and $\preceq^u$: Let $A,B\in \mathcal{P}_0(Y)$, we have \begin{equation}
\label{eq4}
A\preceq^{\ell}B\Longleftrightarrow -B\preceq^u-A.\end{equation}

 Let $\sharp\in\{\ell,u,s\}$. $\sim^{\sharp}$ relation defined by $$A\sim^{\sharp} B\Longleftrightarrow A\preceq^{\sharp} B \text{ and } \  B\preceq^{\sharp} A$$ is an equivalence relation on $\mathcal{P}_0(Y)$. $[A]^{\sharp}$ denotes the equivalence class of $A$ with respect to $\sim^\sharp$, where $A\in \mathcal{P}_0(Y)$ \cite{Marin,Jahn1}.

Note that \begin{equation}
\label{eq5}
A\in[B]^{\ell}\Longleftrightarrow -A\in[-B]^u\end{equation} where $A,B\in \mathcal{P}_0(Y)$.

Now, we recall minimal, maximal, weak minimal and weak maximal set of a family with respect to order relations $\preceq^{\ell}$, $\preceq^u$ and $\preceq^s$.

\begin{definition}\cite{Marin, Jahn2}
Let $\mathcal{S}\subset\mathcal{P}_0(Y)$, $A\in\mathcal{S}$ and $\sharp\in\{\ell,u,s\}$ be given.
  \begin{itemize}
    \item[(i)] $A$ is said to be a $\sharp$-minimal set of $\mathcal{S}$ iff for any $B\in\mathcal{S}$ such that $B\preceq^{\sharp} A$ implies $A\preceq^{\sharp}B$. The family of $\sharp$-minimal sets of $\mathcal{S}$ is denoted by $\sharp-\min \mathcal{S}$.
    \item[(ii)] $A$ is said to be a $\sharp$-maximal set of $\mathcal{S}$ iff for any $B\in\mathcal{S}$ such that $A\preceq^{\sharp}B$ implies $B\preceq^{\sharp}A$. The family of $\sharp$-maximal sets of $\mathcal{S}$ is denoted by $\sharp-\max \mathcal{S}$.
  \end{itemize}
\end{definition}
\begin{definition}\cite{Marin,Jahn2}
Let $\mathcal{S}\subset\mathcal{P}_0(Y)$, $A\in\mathcal{S}$ and $\sharp\in\{\ell,u,s\}$ be given.
  \begin{itemize}
    \item[(i)] $A$ is called a weak $\sharp$-minimal set of $\mathcal{S}$ iff for any $B\in\mathcal{S}$ such that $B\prec^{\sharp} A$ implies $A\prec^{\sharp}B$. The family of weak $\sharp$-minimal sets of $\mathcal{S}$ is denoted by $\sharp-W\min \mathcal{S}$.
    \item[(ii)] $A$ is called a weak $\sharp$-maximal set of $\mathcal{S}$ iff for any $B\in\mathcal{S}$ such that $A\prec^{\sharp}B$ implies $B\prec^{\sharp}A$. The family of weak $\sharp$-maximal sets of $\mathcal{S}$ is denoted by $\sharp-W\max \mathcal{S}$.
  \end{itemize}
\end{definition}
Let $\sharp\in\{\ell,u,s\}$ and $(SOP)$ be given. According to the set optimization approach, if $F(x_0)$ is a $\sharp$-minimal ($\sharp$-maximal) set of $\mathcal{F}(X)$, then $x_0$ is called a solution of $(SOP)$ with respect to $\preceq^{\sharp}$. When $(SOP)$ is considered with respect to $\preceq^{\sharp}$, we denote it by $(\sharp-SOP)$. Similarly, if $F(x_0)$ is a weak $\sharp$-minimal (weak $\sharp$-maximal) set of $\mathcal{F}(X)$, then $x_0$ is called a weak solution of $(\sharp-SOP)$.

Note that if $F$ is a vector-valued function, solution(s) of $(v-SOP)$ coincides with solution(s) of $(\sharp-SOP)$.

The following definition is related with monotonicity of a real valued function defined on $\mathcal{P}_0(Y)$.
\begin{definition}\cite{Marin, Li3}
 Let  $\sharp\in\{\ell,u\}$ and $\mathcal{S}\subset\mathcal{P}_0(Y)$. A function $T:\mathcal{P}_0(Y)\rightarrow\R$ is called
  \begin{itemize}
    \item[(i)] $\sharp$-decreasing ($\sharp$-increasing) on $\mathcal{S}$ if $A,B\in \mathcal{S}$ and $A\preceq^{\sharp}B$ implies $T(B)\leq T(A)$ $(T(A)\leq T(B))$,
    \item[(ii)] strictly $\sharp$-decreasing (strictly $\sharp$-increasing) on $\mathcal{S}$ if $A,B\in \mathcal{S}$ and $A\prec^{\sharp}B$ implies $T(B)< T(A)$ $(T(A)< T(B))$.
  \end{itemize}
\end{definition}
Hern\'{a}ndez and Rodr\'{\i}guez-Mar\'{\i}n generalized Gerstewitz function as
\begin{equation}
\label{eq3}
  G_e(A,B)=\sup_{b\in B}\{\phi_{e,A}(b)\}
\end{equation}
where $e\in -int(C)$ and $\phi_{e,A}(y)=\inf\{t\in\R \ | \ y\in te+A+C\}$ \cite{Marin} and examined some properties of this function and obtained scalarization and optimality conditions for $(\ell-SOP)$. Throughout this paper, we use notation $G_e^{\ell}(\cdot,\cdot)$ instead of $G_e(\cdot,\cdot)$.

If we consider the nonconvex scalarization function $\phi^u_{e,A}(y)=\sup\{t\in\R \ | \ y\in te+A-C\}$, then one can obtain optimality conditions for $(u-SOP)$ similar to the conditions given by Hern\'{a}ndez and Rodr\'{\i}guez-Mar\'{\i}n in \cite{Marin}. In this function taking $A=\{0\}$ and $k=-e$ the equality $\phi^u_{-k,\{0\}}(y)=-z^{C,k}(y)$ is obtained, where $z^{C,k}$ is used to present nonconvex scalarization and  some optimality conditions with respect to $\preceq^u$, $\preceq^{\ell}$, $\preceq^s$ and $\preceq^{cert}$ by E. Köbis and M. A. Köbis in \cite{Kobis}.

\section*{Gerstewitz Vectorizing Function}

In this section, a vectorizing function is defined to replace a $(s-SOP)$ with $(VOP)$ using the generalized Gerstewitz function (\ref{eq3}). Some properties including monotonicity of this function are studied. Furthermore, relationships between this function and set less order relation are examined.

Now we give definition of monotonicity of a function from $\mathcal{P}^0_{\mp C}(Y)$ to $\R^2$.
\begin{definition}
   Let $\mathcal{A}\subset\mathcal{P}^0_{\mp C}(Y)$. A function $T:\mathcal{P}^0_{\mp C}(Y)\rightarrow\R^2$ is called
   \begin{itemize}
     \item[(i)] $s$-increasing ($s$-decreasing) on $\mathcal{A}$ if $A,B\in\mathcal{A}$ and $A\preceq^sB$ implies \linebreak $T(A)\leq_{\R^2_+}T(B) \ (T(B)\leq_{\R^2_+}T(A))$,
     \item[(ii)] strictly $s$-increasing (strictly $s$-decreasing) on $\mathcal{A}$ if $A,B\in\mathcal{A}$ and $A\prec^sB$ implies $T(A)<_{\R^2_+}T(B) \ (T(B)<_{\R^2_+}T(A))$.
   \end{itemize}
\end{definition}

Now, we introduce a vectorizing function which is the main tool to present a new vectorization.
\begin{definition}
 Let $A,B\in\mathcal{P}^0_{\mp C}(Y)$ and $e\in -int(C)$. The vectorizing function \linebreak $w_e:\mathcal{P}^0_{\mp C}(Y)\times\mathcal{P}^0_{\mp C}(Y)\rightarrow\overline{\R}^2$ defined by
 \begin{equation}
 \label{eqw}
   w_e(A,B)=\left(-G_e^{\ell}(A,B), -G_e^{\ell}(-B,-A)\right)
 \end{equation} is called Gerstewitz vectorizing function.
\end{definition}

Throughout this paper, in order to emphasize that the scalarization is adapted for $\preceq^u$ we simply use the notation $G_e^u(B,A)$ instead of $-G_e^{\ell}(-B,-A)$ where $A,B\in\mathcal{P}^0_{\mp C}(Y)$. Then, $$w_e(A,B)=\left(-G_e^{\ell}(A,B),G_e^u(B,A)\right)$$ for all $A,B\in\mathcal{P}^0_{\mp C}(Y)$.

 Here, some properties of $w_e(\cdot,\cdot)$ are stated.
\begin{theorem}
\label{u1}
 Let $A,B\in\mathcal{P}^0_{\mp C}(Y)$. Then the following statements are true:
 \begin{itemize}
   \item[(i)] If $A,B$ are $\mp C$-bounded, then $w_e(A,B)\in\R^2$,
   \item[(ii)] If $A\in[B]^s$, then $w_e(A,\cdot)=w_e(B,\cdot)$ and $w_e(\cdot,A)=w_e(\cdot,B)$,
   \item[(iii)] If $A\in[B]^s$, then $w_e(A,B)=w_e(B,A)$,
   \item[(iv)] $w_e(\cdot,A)$ is $s$-decreasing on $\mathcal{P}^0_{\mp C}(Y)$,
   \item[(v)] $w_e(A,\cdot)$ is $s$-increasing on $\mathcal{P}^0_{\mp C}(Y)$.
 \end{itemize}
\end{theorem}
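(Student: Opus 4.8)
The plan is to reduce all five items to two elementary features of the extended Gerstewitz functional $\phi_{e,S}(y)=\inf\{t\in\R\mid y\in te+S+C\}$ out of which $G_e^{\ell}$ is built. First, $\phi_{e,S}$ sees $S$ only through $S+C$, and the inclusion $S_1+C\subseteq S_2+C$ forces $\{t\mid y\in te+S_1+C\}\subseteq\{t\mid y\in te+S_2+C\}$, hence $\phi_{e,S_1}\geq\phi_{e,S_2}$ pointwise. Second, since $C$ is a convex cone, $\phi_{e,S}$ is $\le_C$-antitone in its argument (if $y'-y\in C$ and $y\in te+S+C$ then $y'\in te+S+C$ as well), so $G_e^{\ell}(S,T)=\sup_{y\in T}\phi_{e,S}(y)=\sup_{y\in T+C}\phi_{e,S}(y)$; in particular $G_e^{\ell}$ sees its second argument only through $T+C$ and absorbs a $+C$ placed under the supremum. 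These are the basic properties of the nonconvex scalarization recorded in \cite{Marin,Tammer}, which I would either cite or verify in a line.

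For (i) I would appeal to the finiteness criteria for $G_e^{\ell}$: $C$-properness of the first slot keeps each $\phi_{e,\cdot}(y)$, hence the defining supremum, strictly above $-\infty$, while $\mp C$-boundedness of the two sets keeps that supremum strictly below $+\infty$ (this is where $e\in -int(C)$ is used). Applying this to $G_e^{\ell}(A,B)$, where $A$ is $C$-proper and $A,B$ are $C$-bounded, and then to $G_e^{\ell}(-B,-A)$, where $-B$ is $C$-proper because $B$ is $-C$-proper and $-B,-A$ are $C$-bounded because $A,B$ are $-C$-bounded, shows that both coordinates of $w_e(A,B)$ are real. This transcription through the known finiteness results is the most technical point; the rest is formal.

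For (ii): by the features above the first coordinate $-G_e^{\ell}(A,\cdot)$ of $w_e(A,\cdot)$ depends on $A$ only through $A+C$, and the second coordinate $-G_e^{\ell}(-\cdot,-A)=-\sup_{y\in -A}\phi_{e,-\cdot}(y)=-\sup_{y\in -A+C}\phi_{e,-\cdot}(y)$ depends on $A$ only through $-A+C=-(A-C)$; thus $w_e(A,\cdot)$ depends on $A$ solely through the pair $(A+C,\,A-C)$, and symmetrically so does $w_e(\cdot,A)$. Since $A\in[B]^s$ unfolds, via $A\preceq^{\ell}B\preceq^{\ell}A$ and $A\preceq^{u}B\preceq^{u}A$, to exactly $A+C=B+C$ and $A-C=B-C$, both equalities in (ii) follow. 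Part (iii) is then immediate by applying (ii) twice: $w_e(A,B)=w_e(B,B)=w_e(B,A)$.

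For (iv) and (v) take $P\preceq^s Q$, which by definition and \eqref{eq4} means $P\preceq^{\ell}Q$ and $-Q\preceq^{\ell}-P$. From $P\preceq^{\ell}Q$ we have $Q\subseteq P+C$ and $Q+C\subseteq P+C$: the latter gives $\phi_{e,P}\le\phi_{e,Q}$ and hence $G_e^{\ell}(P,A)\le G_e^{\ell}(Q,A)$, while the former together with $+C$-absorption gives $G_e^{\ell}(A,Q)\le G_e^{\ell}(A,P)$. Symmetrically, from $-Q\preceq^{\ell}-P$ we have $-P\subseteq -Q+C$ and $-P+C\subseteq -Q+C$: the first (with absorption) gives $G_e^{\ell}(-A,-P)\le G_e^{\ell}(-A,-Q)$, the second gives $\phi_{e,-Q}\le\phi_{e,-P}$ and hence $G_e^{\ell}(-Q,-A)\le G_e^{\ell}(-P,-A)$. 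Reading these four inequalities through the minus signs in $w_e(U,V)=\bigl(-G_e^{\ell}(U,V),\,-G_e^{\ell}(-V,-U)\bigr)$ yields $w_e(Q,A)\le_{\R^2_+}w_e(P,A)$, i.e.\ (iv), and $w_e(A,P)\le_{\R^2_+}w_e(A,Q)$, i.e.\ (v). The only caveat is that these quantities a priori lie in $\overline{\R}$, so I would note that the comparisons are to be read in the extended reals.
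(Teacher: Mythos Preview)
Your proof is correct and follows essentially the same route as the paper: both reduce each coordinate of $w_e$ to the known monotonicity and invariance properties of $G_e^{\ell}$ and then read off the five items componentwise. The only difference is presentational: the paper cites Theorems~3.6 and~3.8 of \cite{Marin} as black boxes for those properties, whereas you unpack them from two elementary observations about $\phi_{e,S}$ (dependence on the first slot only through $S+C$, and $\leq_C$-antitonicity yielding $+C$-absorption in the second slot). Your framing of (ii) via ``$w_e$ depends on $A$ only through $(A+C,A-C)$, and $A\in[B]^s$ is exactly $A+C=B+C$ and $A-C=B-C$'' is a clean way to package what the paper gets by citing \cite{Marin} twice, but the underlying mathematics is the same.
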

\begin{proof}
  \begin{itemize}
    \item[(i)] Since $A$ and $B$ are $C$-bounded and $-C$-bounded, we have \linebreak $-G_e^{\ell}(A,B)\in\R$ and $G_e^u(B,A)\in\R$ from Theorem 3.6 of \cite{Marin}. Therefore, we obtain $w_e(A,B)\in\R^2$.
    \item[(ii)] Since $A\in[B]^{\ell}$ and $A\in[B]^u$, we have $-G_e^{\ell}(A,\cdot)=-G_e^{\ell}(B,\cdot)$ and \linebreak $G_e^u(\cdot,A)=G_e^u(\cdot,B)$ from Theorem 3.8 (i) and (iii) of \cite{Marin}, respectively. Therefore, we obtain $w_e(A,\cdot)=w_e(B,\cdot)$. Similarly, we get $w_e(\cdot,A)=w_e(\cdot,B)$ by using Theorem 3.8 (i) and (iii) of \cite{Marin}.
    \item[(iii)] Since $A\in[B]^{\ell}$ and $A\in[B]^u$, we have $-G_e^{\ell}(A,B)=-G_e^{\ell}(B,A)$ and \linebreak $G_e^u(A,B)=G_e^u(B,A)$ from Theorem 3.8 (iv), respectively. Then, we obtain \linebreak $w_e(A,B)=w_e(B,A)$.
    \item[(iv)] Assume that $B,D\in\mathcal{P}_{0\mp C}(Y)$ and $B\preceq^sD$. Then, $B\preceq^{\ell}D$ and $B\preceq^uD$. We have $-G_e^{\ell}(D,A)\leq -G_e^{\ell}(B,A)$ and $G_e^u(A,D)\leq G_e^u(A,B)$ from Theorem 3.8 (v) and (ii) of \cite{Marin}, respectively. Then, we have $w_e(D,A)\leq_{\R^2_+}w_e(B,A)$. Therefore $w_e(\cdot,A)$ is $s$-decreasing on $\mathcal{P}^0_{\mp C}(Y)$.
    \item[(v)] Assume that $B,D\in\mathcal{P}_{0\mp C}(Y)$ and $B\preceq^sD$. Then, $B\preceq^{\ell}D$ and $B\preceq^uD$. We have $-G_e^{\ell}(A,B)\leq -G_e^{\ell}(A,D)$ and $G_e^u(B,A)\leq G_e^u(D,A)$ from Theorem 3.8 (ii) and (v) of \cite{Marin}, respectively. Then, we have $w_e(A,B)\leq_{\R^2_+}w_e(A,D)$. Therefore, $w_e(A,\cdot)$ is $s$-increasing on $\mathcal{P}^0_{\mp C}(Y)$.
  \end{itemize}
\end{proof}
\begin{theorem}
\label{u2}
  Let $A\in\mathcal{P}^0_{\mp C}(Y)$ be a $\mp C$-compact set. Then the following statements are true:
  \begin{itemize}
     \item[(i)] $w_e(\cdot,A)$ is strictly $s$-decreasing on the family of $\mp C$-compact sets,
   \item[(ii)] $w_e(A,\cdot)$ is strictly $s$-increasing on the family of $\mp C$-compact sets.
  \end{itemize}
\end{theorem}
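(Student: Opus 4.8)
The plan is to repeat the proof of Theorem~\ref{u1}(iv)--(v), replacing every non-strict monotonicity statement for $G_e^{\ell}$ and $G_e^u$ by its strict counterpart; the $\mp C$-compactness hypothesis is precisely what upgrades an inclusion ``$\subseteq\ \cdot+int(C)$'' to a strict inequality of the relevant suprema, playing for the strict order relations the role that $\mp C$-boundedness (finiteness) played in Theorem~\ref{u1}.

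For (i), let $B,D$ be $\mp C$-compact with $B\prec^sD$, so that $B\prec^{\ell}D$ (i.e.\ $D\subseteq B+int(C)$) and $B\prec^uD$ (i.e.\ $-B\subseteq -D+int(C)$, after passing to negatives as in (\ref{eq4})). Since $w_e(D,A)=\bigl(-G_e^{\ell}(D,A),-G_e^{\ell}(-A,-D)\bigr)$ and likewise for $w_e(B,A)$, the conclusion $w_e(D,A)<_{\R^2_+}w_e(B,A)$ is equivalent to the pair
\[
G_e^{\ell}(B,A)<G_e^{\ell}(D,A)\qquad\text{and}\qquad G_e^{\ell}(-A,-B)<G_e^{\ell}(-A,-D),
\]
which are the strict forms of the two monotonicity properties of \cite{Marin} (Theorem~3.8 (v) and (ii)) invoked in the proof of Theorem~\ref{u1}(iv); all four values are finite by Theorem~\ref{u1}(i), since an $\mp C$-compact set is $\mp C$-bounded. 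The mechanism (cf.\ the corresponding arguments in \cite{Marin}) is a gap estimate: if $P$ is $\mp C$-compact and $P\subseteq Q+int(C)$, then $P+\mu e\subseteq Q+C$ for some $\mu>0$; combined with the standard identities $\phi_{e,S}(y+te)=\phi_{e,S}(y)+t$ ($t\in\R$) and $\phi_{e,S}(y+c)\leq\phi_{e,S}(y)$ ($c\in C$) this gives, for the first inequality (gap estimate with $P=D$, $Q=B$), $\phi_{e,B}(a)\leq\phi_{e,D}(a)-\mu$ for every $a\in A$; taking the supremum over $A$ yields $G_e^{\ell}(B,A)\leq G_e^{\ell}(D,A)-\mu$. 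The second inequality is obtained the same way with $P=-B$, $Q=-D$, the supremum being taken over $-B$.

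Part (ii) is entirely parallel: from $B\prec^sD$ one derives $G_e^{\ell}(A,D)<G_e^{\ell}(A,B)$ (gap estimate with $P=D\subseteq B+int(C)$, supremum over $D$) and $G_e^{\ell}(-D,-A)<G_e^{\ell}(-B,-A)$ (gap estimate with $P=-B\subseteq -D+int(C)$, supremum over $-A$), and together these say $w_e(A,B)<_{\R^2_+}w_e(A,D)$. I expect the only genuine difficulty to be bookkeeping --- keeping straight the sign flips between $\preceq^{\ell}$ and $\preceq^u$ and identifying, in each of the four strict inequalities, which of the sets $B,D,-B,-D$ is the compact one supplying the uniform gap; the analytic content reduces to the gap estimate together with the two identities for $\phi_{e,S}$.
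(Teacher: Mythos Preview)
Your proposal is correct and follows essentially the same route as the paper: reduce $w_e(D,A)<_{\R^2_+}w_e(B,A)$ (resp.\ $w_e(A,B)<_{\R^2_+}w_e(A,D)$) to the two strict monotonicity inequalities for $G_e^{\ell}$ in its first and second variables, exactly mirroring the non-strict case in Theorem~\ref{u1}(iv)--(v). The only difference is presentational: the paper simply cites Theorem~3.9~(i)--(ii) of \cite{Marin} for those strict inequalities, whereas you unpack the proof of that result via the uniform gap $P+\mu e\subseteq Q+C$ coming from $\mp C$-compactness together with the identities $\phi_{e,S}(y+te)=\phi_{e,S}(y)+t$ and $\phi_{e,S}(y+c)\le\phi_{e,S}(y)$.
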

\begin{proof}
  \begin{itemize}
     \item[(i)] Assume that $B,D\in\mathcal{P}^0_{\mp C}(Y)$ are $\mp C$-compact sets and $B\prec^sD$. Then, \linebreak $B\prec^{\ell}D$ and $B\prec^uD$. We have $-G_e^{\ell}(D,A)< -G_e^{\ell}(B,A)$ and \linebreak $G_e^u(A,D)< G_e^u(A,B)$ from Theorem 3.9 (ii) and (i) of \cite{Marin}, respectively. Hence, we obtain $w_e(D,A)<_{\R^2_+}w_e(B,A)$. Therefore, $w_e(\cdot,A)$ is strictly $s$-decreasing on the family of $\mp C$-compact sets.
   \item[(ii)] This statement can be proved similar to (i) by using Theorem 3.9 (i) and (ii) of \cite{Marin}.
  \end{itemize}
  \end{proof}
  Now we examine relationships between set less order relation and Gerstewitz vectorizing function.

  Under different assumptions a necessary and sufficient condition similar to (iii) of Theorem \ref{ut1} was given by means of $z^{C,k}$ in \cite{Kobis}. These results are similar because if $k=-e$, then $G_e^{\ell}(A,B)=\underset{b\in B}{\sup} \ \underset{a\in A}{\inf}z^{C,k}(a-b)$.
  \begin{theorem}
  \label{ut1}
    Let $A\in\mathcal{P}^0_{\mp C}(Y)$ be a $\mp C$-closed set. Then, the following statements are true:
    \begin{itemize}
      \item[(i)] $w_e(A,A)=(0,0)$,
      \item[(ii)] If $A\in[B]^s$, then $w_e(A,B)=w_e(B,A)=(0,0)$,
      \item[(iii)] $A\preceq^sB$ if and only if $(0,0)\leq_{\R^2_+}w_e(A,B)$.
    \end{itemize}
  \end{theorem}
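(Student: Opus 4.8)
The plan is to prove (i) directly from the structure of the generalized Gerstewitz function, then to bootstrap (ii) and the ``only if'' half of (iii) out of (i) together with the monotonicity of $w_e$ already established in Theorem \ref{u1}, leaving only the ``if'' half of (iii) to be read off from the scalar characterization $D\preceq^{\ell}E\iff G_e^{\ell}(D,E)\le 0$ (valid when $D$ is $C$-closed) of \cite{Marin}. Throughout I will use that the standing hypotheses on $A$ transfer to $-A$: since $A$ is $-C$-closed and $-C$-proper, $-A+C=-(A-C)$ is closed and $\neq Y$, so $-A\in\mathcal{P}^0_{\mp C}(Y)$ is $C$-closed (and $C$-proper) as well.

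For (i), since $w_e(A,A)=\left(-G_e^{\ell}(A,A),-G_e^{\ell}(-A,-A)\right)$ it suffices to show $G_e^{\ell}(D,D)=0$ for every $C$-closed, $C$-proper $D$ and apply this to $D=A$ and $D=-A$. Reflexivity of $\preceq^{\ell}$ gives $D\preceq^{\ell}D$, hence $G_e^{\ell}(D,D)\le 0$. For the reverse inequality the key point is that $D\not\subseteq int(D+C)$: otherwise $D+C\subseteq int(D+C)+C\subseteq int(D+C)$, which would make $D+C$ a nonempty proper subset of $Y$ that is both open and closed, contradicting connectedness of $Y$. Hence some $d\in D$ lies on the boundary of the closed set $D+C$; since $\phi_{e,D}(d)<0$ would force $d\in D+int(C)\subseteq int(D+C)$, we must have $\phi_{e,D}(d)=0$, and therefore $G_e^{\ell}(D,D)=\sup_{d'\in D}\phi_{e,D}(d')\ge 0$. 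Thus $G_e^{\ell}(D,D)=0$ and $w_e(A,A)=(0,0)$.

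For (ii), $A\in[B]^s$ yields $A\preceq^{\ell}B$, $B\preceq^{\ell}A$, $A\preceq^uB$ and $B\preceq^uA$, which force $A+C=B+C$ and $A-C=B-C$; hence $B$ inherits $\mp C$-closedness and $\mp C$-properness from $A$, so part (i) applies to $B$ and gives $w_e(B,B)=(0,0)$. Theorem \ref{u1}(ii) then gives $w_e(A,B)=w_e(B,B)=(0,0)$ and $w_e(B,A)=w_e(B,B)=(0,0)$ (the last equality also follows from Theorem \ref{u1}(iii)). For the ``only if'' part of (iii): by part (i) $w_e(A,A)=(0,0)$, and $w_e(A,\cdot)$ is $s$-increasing by Theorem \ref{u1}(v), so $A\preceq^sB$ gives $(0,0)=w_e(A,A)\leq_{\R^2_+}w_e(A,B)$. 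For the ``if'' part, unwinding (\ref{eqw}) shows that $(0,0)\leq_{\R^2_+}w_e(A,B)$ is exactly the conjunction $G_e^{\ell}(A,B)\le 0$ and $G_e^{\ell}(-B,-A)\le 0$; the $\preceq^{\ell}$--$G_e^{\ell}$ characterization from \cite{Marin} turns the first into $A\preceq^{\ell}B$ and the second into $-B\preceq^{\ell}-A$, which by (\ref{eq4}) is the same as $A\preceq^uB$, and together these are precisely $A\preceq^sB$.

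The step I expect to be the real obstacle is part (i): parts (ii)--(iii) are in essence a translation between $w_e$ and the scalarizations $G_e^{\ell},G_e^u$ plus bookkeeping of the $\mp C$-closed/$\mp C$-proper hypotheses and their stability under $A\mapsto-A$, whereas upgrading $G_e^{\ell}(D,D)\le 0$ to the equality $G_e^{\ell}(D,D)=0$ genuinely uses both the closedness and the properness of $D$ through the boundary argument above. This equality, as well as the $\preceq^{\ell}$--$G_e^{\ell}$ characterization invoked in (iii) (whose nontrivial direction is where the closedness hypotheses enter), can alternatively be quoted verbatim from \cite{Marin}.
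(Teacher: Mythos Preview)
Your proof is correct. The paper's own argument is shorter and more uniform: it simply splits $w_e$ into its two scalar components and quotes Theorem~3.10 of \cite{Marin} verbatim for each part --- item (i) of that theorem for $w_e(A,A)=(0,0)$, item (ii) for the equivalence-class statement, and item (iii) for both directions of the order characterization. There is no separate boundary/connectedness argument and no appeal to Theorem~\ref{u1}; everything is delegated to \cite{Marin} componentwise.

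Your route differs mainly in part (i), where instead of citing \cite{Marin} you give a self-contained proof that $G_e^{\ell}(D,D)=0$ via the observation that $D\not\subseteq\mathrm{int}(D+C)$ (using $C$-closedness and $C$-properness together with connectedness of $Y$). This is a genuine gain in transparency: it pinpoints exactly where the $\mp C$-closed and $\mp C$-proper hypotheses are consumed. Your derivations of (ii) and of the ``only if'' in (iii) by bootstrapping from (i) through the monotonicity in Theorem~\ref{u1} are elegant, though note that Theorem~\ref{u1} itself already rests on \cite{Marin} (Theorem~3.8), so the net dependence on \cite{Marin} is comparable. For the ``if'' direction of (iii) both you and the paper invoke the same scalar characterization from \cite{Marin}; your explicit check that the hypotheses transfer from $A$ to $-A$ (and, in (ii), from $A$ to $B$ via $A+C=B+C$, $A-C=B-C$) is more careful than the paper, which leaves this implicit.
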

\begin{proof}
   \begin{itemize}
      \item[(i)] From Theorem 3.10 (i) of \cite{Marin} we have $-G_e^{\ell}(A,A)=0$ and $G_e^u(A,A)=0$. So, we obtain $$w_e(A,A)=(-G_e^{\ell}(A,A),G_e^u(A,A))=(0,0).$$
      \item[(ii)] Since $A\in[B]^{\ell}$ and $A\in[B]^u$, we have $-G_e^{\ell}(A,B)=-G_e^{\ell}(B,A)=0$ and \linebreak $G_e^u(A,B)=G_e^u(B,A)=0$ from Theorem 3.10 (ii) of \cite{Marin}, respectively. Therefore, $$w_e(A,B)=w_e(B,A)=(-G_e^{\ell}(A,B),G_e^u(B,A))=(0,0).$$
      \item[(iii)] $(\Longrightarrow)$ Let $A\preceq^sB$. Then, $A\preceq^{\ell}B$ and $A\preceq^uB$. Since $A\preceq^{\ell}B$ and $A\preceq^uB$, we have $-G_e^{\ell}(A,B)\geq0$ and $G_e^u(B,A)\geq0$ from Theorem 3.10 (iii) of \cite{Marin}, respectively. Thus, we obtain $(0,0)\leq_{\R^2_+}w_e(A,B)$.

          $(\Longleftarrow)$ Let $(0,0)\leq_{\R^2_+}w_e(A,B)$. Then, we have $G_e^{\ell}(A,B)\leq0$ and $G_e^u(B,A)\geq0$. So, $A\preceq^{\ell}B$ and $A\preceq^uB$ from Theorem 3.10 (iii) of \cite{Marin}, respectively. Therefore, $A\preceq^sB$.
    \end{itemize}
\end{proof}
\begin{theorem}
\label{u7}
  Let $A,B\in\mathcal{P}^0_{\mp C}(Y)$ be $\mp C$-compact sets. Then, $$(0,0)<_{\R^2_+}w_e(A,B)\Longleftrightarrow A\prec^sB.$$

\end{theorem}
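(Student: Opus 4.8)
The plan is to split the bivariate inequality into its two scalar components and identify each with one of the two strict set relations whose conjunction defines $\prec^s$. Recall that $w_e(A,B)=\left(-G_e^{\ell}(A,B),\,G_e^u(B,A)\right)$ with $G_e^u(B,A)=-G_e^{\ell}(-B,-A)$, and that $(0,0)<_{\R^2_+}w_e(A,B)$ means exactly that both coordinates are strictly positive. Hence the asserted equivalence is the same as
$$\big(G_e^{\ell}(A,B)<0 \ \text{ and } \ G_e^{\ell}(-B,-A)<0\big)\ \Longleftrightarrow\ \big(A\prec^{\ell}B \ \text{ and } \ A\prec^uB\big),$$
and it suffices to prove the two scalar equivalences separately.

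For the first coordinate I would use the strict analogue of Theorem \ref{ut1}(iii): since $A$ and $B$ are $\mp C$-compact (hence in particular $C$-closed with the relevant properness), the continuity of $\phi_{e,A}$ and the identity $\{y:\phi_{e,A}(y)<0\}=A+int(C)$ from \cite{Marin} give $G_e^{\ell}(A,B)<0\Longleftrightarrow A\prec^{\ell}B$; concretely, compactness of $B$ forces the supremum defining $G_e^{\ell}(A,B)$ to be attained, so a strictly negative value of $G_e^{\ell}(A,B)$ is equivalent to $\phi_{e,A}(b)<0$, i.e. $b\in A+int(C)$, for every $b\in B$. If one prefers to avoid quoting the strict ``iff'' directly, one can reduce to the already-proved non-strict statement: using $e\in -int(C)$ and the translation property $G_e^{\ell}(A,B+\lambda e)=G_e^{\ell}(A,B)+\lambda$, one checks that $A\prec^{\ell}B$ is equivalent to $A\preceq^{\ell}B+\lambda e$ for some $\lambda>0$, and then applies Theorem \ref{ut1}(iii) to $B+\lambda e$.

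For the second coordinate, note that $-B$ and $-A$ are again $\mp C$-compact, so the same result yields $G_e^{\ell}(-B,-A)<0\Longleftrightarrow -B\prec^{\ell}-A$. The strict counterpart of (\ref{eq4}) then gives $-B\prec^{\ell}-A\Longleftrightarrow -A\subset -B+int(C)\Longleftrightarrow A\subset B-int(C)\Longleftrightarrow A\prec^uB$. Combining the two coordinates, $(0,0)<_{\R^2_+}w_e(A,B)$ holds iff $A\prec^{\ell}B$ and $A\prec^uB$, i.e. iff $A\prec^sB$, which is the claim.

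The only genuinely delicate point is the scalar strict characterization $G_e^{\ell}(A,B)<0\Longleftrightarrow A\prec^{\ell}B$, and within it the forward implication $A\prec^{\ell}B\Rightarrow G_e^{\ell}(A,B)<0$: this needs a uniform (not merely pointwise) gap between $B$ and $A+int(C)$, which is exactly what $\mp C$-compactness of $B$ supplies, together with properness of $A$ ensuring $G_e^{\ell}(A,B)$ is finite. The rest is routine manipulation of the definition of $w_e$, the componentwise order on $\R^2$, and the sign/duality relations already recorded in the Preliminaries.
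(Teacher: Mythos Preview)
Your proposal is correct and follows essentially the same route as the paper: split $(0,0)<_{\R^2_+}w_e(A,B)$ into its two scalar inequalities and identify each with $A\prec^{\ell}B$ and $A\prec^uB$ via the strict characterization from \cite{Marin} (the paper cites Corollary~3.11(i) there directly for both coordinates). Your only difference is cosmetic---you handle the second coordinate by passing explicitly through $G_e^{\ell}(-B,-A)<0\Leftrightarrow -B\prec^{\ell}-A\Leftrightarrow A\prec^uB$, and you sketch why the strict scalar equivalence holds rather than just citing it---but the underlying argument is the same.
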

\begin{proof}
$(\Longrightarrow)$ Let $(0,0)<_{\R^2_+}w_e(A,B)$. Then, $G_e^{\ell}(A,B)<0$ and $G_e^u(B,A)>0$. Since $G_e^{\ell}(A,B)<0$ and $G_e^u(B,A)>0$, we have $A\prec^{\ell}B$ and $A\prec^uB$ from Corollary 3.11 (i) of \cite{Marin}, respectively. Hence, $A\prec^sB$.

          $(\Longleftarrow)$ Let $A\prec^sB$. Then, $A\prec^{\ell}B$ and $A\prec^uB$. As $A\prec^{\ell}B$ and $A\prec^uB$, we have $G_e^{\ell}(A,B)<0$ and $G_e^u(B,A)>0$ from Corollary 3.11 (i) of \cite{Marin}, respectively. Therefore, we obtain $(0,0)<_{\R^2_+}w_e(A,B)$.
\end{proof}

 We define a vectorizing function $v_e:\mathcal{P}^0_{\mp C}(Y)\rightarrow{\overline{\R}}^2$ as

$$v_e(A):=w_e(\{0\},A)=\left(-G_e^{\ell}(\{0\},A),G_e^u(A,\{0\})\right)$$ where $e\in-int(C)$ in order to use the advantage of computation of a single variable function.

It can be seen that $v_e(\cdot)$ is $s$-increasing on $\mathcal{P}^0_{\mp C}(Y)$ and strictly $s$-increasing on the family of $\mp C$-compact sets. Let $A,B\in\mathcal{P}^0_{\mp C}(Y)$. If $A\in[B]^s$, then $v_e(A)=v_e(B)$.

 By taking [19, Example 3.1] we demonstrate the calculations of $w_e(\cdot,\cdot)$ and $v_e(\cdot)$.

\begin{example}
  Let $Y=\R^2$, $C=\R^2_+$ and $F:[-1,1]\rightrightarrows Y$ be defined as  $$F(x):=\left\{(y_1,y_2)\in\R^2 \ | \ (y_1-2x^2)^2+(y_2-2x^2)^2\leq(x^2+1)^2\right\}$$ for all $x\in[-1,1]$ (Fig. \ref{fig18}).

\begin{figure}[h]
\begin{center}
\psset{unit=.75cm}
\begin{pspicture}(-1.9,-1.9)(6.1,6.1)
\psaxes[arrows=->](0,0)(-1.9,-1.9)(6,6)
\rput[h](-0.2,-0.2){\black{{\textbf{O}}}}
\rput[h](6.2,0){\black{$x$}}
\rput[h](0,6.2){\black{$y$}}
\pscircle[linecolor=black,linewidth=1pt,fillstyle=vlines,hatchwidth=.01](0,0){1}
\rput[h](-1.3,.5){\footnotesize$F(0)$}
\pscircle[linecolor=blue,linewidth=1pt,fillstyle=hlines,hatchcolor=blue,hatchwidth=.01](.5,.5){1.25}
\rput[h](2,-1){\blue\footnotesize$F(\frac{1}{2})=F(-\frac{1}{2})$}
\pscircle[linecolor=red,linewidth=1pt,fillstyle=vlines,hatchcolor=red,hatchwidth=.01](2,2){2}
\rput[h](4,4){\red\footnotesize$F(1)=F(-1)$}
\end{pspicture}
\end{center}
\caption{Some image sets of $F$}
\label{fig18}
\end{figure}
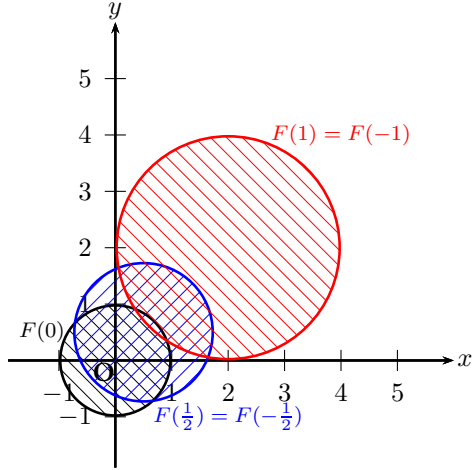

First, we choose $e=\left(-\frac{\sqrt{2}}{2},-\frac{\sqrt{2}}{2}\right)$ to find $w_e(F(x),F(0))$ and $v_e(F(x))$ for an arbitrary $x\in[-1,1]$. So, we have to calculate $G_e^{\ell}(F(x),F(0))$ and $G_e^u(F(0),F(x))$. To find the value of $$G_e^{\ell}(F(x),F(0))=\min\{t\in\R \ | \ F(0)\subset te+F(x)+C\}$$ we should evaluate the smallest $t$ that allows $te+F(x)+C$ to cover $F(0)$. This value means how long at least $F(x)+C$ should move along the direction $e$ to cover the set $F(0)$. We achieve this smallest value clearly by substracting difference of radii of $F(x)$ and $F(0)$ from the distance between centers of these balls as seen in Fig. \ref{fig19}. Hence we get $G_e^{\ell}(F(x),F(0))=2\sqrt{2}x^2-x^2$.

\begin{figure}[h]
\begin{center}
\psset{Dy=4,Dx=4,xunit=9mm,yunit=9mm}
\subfigure[$F(0)$, $F(x)$ and $e$]{\begin{pspicture}(-1,-1.5)(3.4,3.4)
\psaxes[arrows=->](0,0)(-1.2,-1.2)(3,3)
\rput[h](-0.2,0.2){\black{{\footnotesize\textbf{O}}}}
\rput[h](3.1,0){\black{$x$}}
\rput[h](0,3.1){\black{$y$}}
\pscircle[linecolor=black,linewidth=1pt](0,0){1}
\rput[h](-1,-1){\footnotesize$F(0)$}
\pscircle[linecolor=blue,linewidth=1pt](.9,.9){1.5}
\rput[h](1,2.8){\blue\footnotesize$F(x)$}
\psline[linecolor=red,linewidth=.5pt]{<-}(-.73,-.73)(0,0)
\rput[h](-.6,-.4){\red\small$e$}
\rput[h](1.2,1.2){\blue\tiny$(2x^2,2x^2)$}
\psdot[linewidth=.5pt,linecolor=blue](.9,.9)
\psline[linewidth=.7pt](.9,.9)(2.5,.9)
\rput[h](2,.7){\tiny$x^2+1$}
\end{pspicture}}
\subfigure[$F(0)$, $F(x)$ and $F(x)+C$]{\begin{pspicture}(-1,-1.5)(3.4,3.4)
\psaxes[arrows=->](0,0)(-1.2,-1.2)(3,3)
\rput[h](-0.2,0.2){\black{{\footnotesize\textbf{O}}}}
\rput[h](3.1,0){\black{$x$}}
\rput[h](0,3.1){\black{$y$}}
\pscircle[linecolor=black,linewidth=1pt](0,0){1}
\rput[h](-1,-1){\footnotesize$F(0)$}
\pscustom[linecolor=white,linewidth=0.001pt,fillstyle=hlines,ticks=none,hatchwidth=.5pt,hatchcolor=cyan]{
\pscircle[linecolor=blue,fillstyle=solid,hatchwidth=.5pt,fillcolor=white,hatchcolor=white](.9,.9){1.5}\pscurve(-.6,.9)(.6,-.9)}
\pscustom[linecolor=white,linewidth=0.001pt,fillstyle=hlines,ticks=none,hatchwidth=.5pt,hatchcolor=cyan]{
\psline(-.7,.4)(-.7,3)
\psline(-.7,3)(3,3)
\psline(3,3)(3,-.7)
\psline(3,-.7)(.4,-.7)}
\psline[linecolor=cyan,linewidth=1pt](-.75,.67)(-.75,3)
\psline[linecolor=cyan,linewidth=1pt](.67,-.75)(3,-.75)
\pscircle[linecolor=cyan,linewidth=1pt](.9,.9){1.5}
\rput[h](1,3.3){\cyan\footnotesize$F(x)+C$}
\psline[linecolor=red,linewidth=.5pt]{<->}(-.73,-.73)(-.25,-.25)
\rput[h](-.9,-.5){\red\tiny$2\sqrt{2}x^2-x^2$}
\pscircle[linecolor=blue,linewidth=1pt](.9,.9){1.5}
\rput[h](1,2.8){\blue\footnotesize$F(x)$}
\end{pspicture}}
\subfigure[$F(0)$, $F(x)$ and $(2\sqrt{2}x^2-x^2)e+F(x)+C$]{\begin{pspicture}(-1,-1.5)(3.4,3.4)
\psaxes[arrows=->](0,0)(-1.2,-1.2)(3,3)
\rput[h](-0.2,0.2){\black{{\footnotesize\textbf{O}}}}
\rput[h](3.1,0){\black{$x$}}
\rput[h](0,3.1){\black{$y$}}
\pscustom[linecolor=white,linewidth=0.001pt,fillstyle=hlines,ticks=none,hatchwidth=.5pt,hatchcolor=cyan]{
\pscircle[linecolor=blue,fillstyle=solid,hatchwidth=.5pt,fillcolor=white,hatchcolor=white](.4,.4){1.5}\pscurve(.1,.4)(.1,-.4)}
\pscustom[linecolor=white,linewidth=0.001pt,fillstyle=hlines,ticks=none,hatchwidth=.5pt,hatchcolor=cyan]{
\psline(-1.2,-.1)(-1.2,3)
\psline(-1.2,3)(3,3)
\psline(3,3)(3,-1.2)
\psline(3,-1.2)(-.1,-1.2)}
\psline[linecolor=cyan,linewidth=1pt](-1.25,.17)(-1.25,3)
\psline[linecolor=cyan,linewidth=1pt](.17,-1.25)(3,-1.25)
\pscircle[linecolor=cyan,linewidth=1pt](.4,.4){1.5}
\rput[h](2,3.3){\cyan\footnotesize$(2\sqrt{2}x^2-x^2)e+F(x)+C$}
\rput[h](-.5,-.4){\footnotesize$F(0)$}
\pscircle[linecolor=blue,linewidth=1pt](.9,.9){1.5}
\rput[h](1,2.8){\blue\footnotesize$F(x)$}
\pscircle[linecolor=black,linewidth=1pt](0,0){1}
\end{pspicture}}\end{center}
\caption{The calculation of $G_e^{\ell}(F(x),F(0))$ geometrically}
\label{fig19}
\end{figure}

With a similar manner, to find $$G_e^u(F(0),F(x))=\max\{t\in\R \ | \ F(x)\subset te+F(0)-C\}$$ we calculate the largest value of $t$ that allows $F(x)$ to be covered by $te+F(0)-C$. \linebreak This largest value is clearly negative of the distance between the vectors \linebreak $(3x^2+1,3x^2+1)$ and $(1,1)$ as seen in Fig. \ref{fig20}. Thus, we have  $$G_e^u(F(0),F(x))=-3\sqrt{2}x^2.$$

 Finally, we have
\begin{equation}
\label{eq1}
\begin{array}{ll}
                        w_e(F(x),F(0)) & =\left(-G_e^{\ell}(F(x),F(0)),G_e^u(F(0),F(x))\right) \\
                         & =\left((1-2\sqrt{2})x^2,-3\sqrt{2}x^2\right).
                      \end{array}\end{equation}

\begin{figure}[h]
\begin{center}
\psset{Dy=4,Dx=4,xunit=8.9mm,yunit=8.9mm}
\subfigure[$F(0)$, $F(x)$ and $e$]{\begin{pspicture}(-1,-2)(3.5,3.5)
\psaxes[arrows=->](0,0)(-1.2,-1.2)(3,3)
\rput[h](-0.2,0.2){\black{{\footnotesize\textbf{O}}}}
\rput[h](3.1,0){\black{$x$}}
\rput[h](0,3.1){\black{$y$}}
\pscircle[linecolor=black,linewidth=1pt](0,0){1}
\rput[h](-1,-1){\footnotesize$F(0)$}
\pscircle[linecolor=blue,linewidth=1pt](.9,.9){1.5}
\rput[h](1,2.8){\blue\footnotesize$F(x)$}
\psline[linecolor=red,linewidth=.5pt]{<-}(-.73,-.73)(0,0)
\rput[h](-.6,-.4){\red\small$e$}
\rput[h](1.2,1.2){\blue\tiny$(2x^2,2x^2)$}
\psdot[linewidth=.5pt,linecolor=blue](.9,.9)
\psline[linewidth=.7pt](.9,.9)(2.5,.9)
\rput[h](2,.7){\tiny$x^2+1$}
\end{pspicture}}
\subfigure[$F(0)$, $F(x)$ and $F(0)-C$]{\begin{pspicture}(-1,-2)(3.4,3.4)
\psaxes[arrows=->](0,0)(-1.2,-1.2)(3,3)
\rput[h](-0.2,0.2){\black{{\footnotesize\textbf{O}}}}
\rput[h](3.1,0){\black{$x$}}
\rput[h](0,3.1){\black{$y$}}
\pscircle[linecolor=cyan,linewidth=1pt](0,0){1}
\pscustom[linecolor=white,linewidth=0.001pt,fillstyle=hlines,ticks=none,hatchwidth=.5pt,hatchcolor=cyan]{
\pscircle[linecolor=blue,fillstyle=solid,hatchwidth=.5pt,fillcolor=white,hatchcolor=white](0,0){1}\pscurve(-1.1,0)(0,1.1)}
\pscustom[linecolor=white,linewidth=0.001pt,fillstyle=hlines,ticks=none,hatchwidth=.5pt,hatchcolor=cyan]{
\psline(-1.3,1.1)(0,1.1)
\psline(0,1.1)(0,-1.3)
\psline(0,-1.3)(-1.3,-1.3)
\psline(-1.3,-1.3)(-1.3,1.1)}
\pscustom[linecolor=white,linewidth=0.001pt,fillstyle=hlines,ticks=none,hatchwidth=.5pt,hatchcolor=cyan]{
\psline(0,1.1)(1.1,0)
\psline(1.1,0)(0,0)
\psline(0,0)(0,1.1)}
\rput[h](1,2.8){\blue\footnotesize$F(x)$}
\pscustom[linecolor=white,linewidth=0.001pt,fillstyle=hlines,ticks=none,hatchwidth=.5pt,hatchcolor=cyan]{
\psline(0,0)(1.1,0)
\psline(1.1,0)(1.1,-1.3)
\psline(1.1,-1.3)(0,-1.3)
\psline(0,-1.3)(0,0)}
\psline[linecolor=cyan,linewidth=1pt](1.1,0)(1.1,-1.3)
\psline[linecolor=cyan,linewidth=1pt](0,1.1)(-1.3,1.1)
\pscircle[linecolor=blue,linewidth=1pt](.9,.9){1.5}
\rput[h](-.5,-1.5){\cyan\footnotesize$F(0)-C$}
\psline[linecolor=red,linewidth=.5pt]{<->}(1.1,1.1)(2.58,2.58)
\psline[linestyle=dashed,linewidth=.5pt](1.1,0)(1.1,1.1)
\psline[linestyle=dashed,linewidth=.5pt](0,1.1)(1.1,1.1)
\psline[linestyle=dashed,linewidth=.5pt](2.58,0)(2.58,2.58)
\psline[linestyle=dashed,linewidth=.5pt](0,2.58)(2.58,2.58)
\rput[h](1.5,2){\red\tiny$3\sqrt{2}x^2$}
\pscircle[linecolor=black,linewidth=1pt](0,0){1}
\rput[h](-1,-1){\footnotesize$F(0)$}
\rput[h](1.5,1){\tiny$(1,1)$}
\psdot[linewidth=.5pt](1.1,1.1)
%\rput[h](2.7,2.8){\tiny$(3x^2+1,3x^2+1)$}
\psdot[linewidth=.5pt](2.56,2.56)
\psaxes[arrows=->](0,0)(-1.2,-1.2)(3,3)
\end{pspicture}}
\subfigure[$F(0)$, $F(x)$ and $3\sqrt{2}x^2e+F(0)-C$]{\begin{pspicture}(-1,-2)(3.4,3.4)
\psaxes[arrows=->](0,0)(-1.2,-1.2)(3,3)
\rput[h](-0.2,0.2){\black{{\footnotesize\textbf{O}}}}
\rput[h](3.1,0){\black{$x$}}
\rput[h](0,3.1){\black{$y$}}
\pscircle[linecolor=cyan,linewidth=1pt](1.5,1.5){1}
\pscustom[linecolor=white,linewidth=0.001pt,fillstyle=hlines,ticks=none,hatchwidth=.5pt,hatchcolor=cyan]{
\pscircle[linecolor=blue,fillstyle=solid,hatchwidth=.5pt,fillcolor=white,hatchcolor=white](1.5,1.5){1}\pscurve(1.1,0)(0,1.1)}
\pscustom[linecolor=white,linewidth=0.001pt,fillstyle=hlines,ticks=none,hatchwidth=.5pt,hatchcolor=cyan]{
\psline(-1.3,2.58)(1,2.58)
\psline(1,2.58)(1,-1.3)
\psline(1,-1.3)(-1.3,-1.3)
\psline(-1.3,-1.3)(-1.3,2.58)}
\pscustom[linecolor=white,linewidth=0.001pt,fillstyle=hlines,ticks=none,hatchwidth=.5pt,hatchcolor=cyan]{
\psline(0,1.1)(1.1,0)
\psline(1.1,0)(0,0)
\psline(0,0)(0,1.1)}
\pscustom[linecolor=white,linewidth=0.001pt,fillstyle=hlines,ticks=none,hatchwidth=.5pt,hatchcolor=cyan]{
\psline(2.58,-1.3)(0,-1.3)
\psline(0,-1.3)(0,1.58)
\psline(0,1.58)(2.58,1.58)
\psline(2.58,1.58)(2.58,-1.3)}
\psline[linecolor=cyan,linewidth=1pt](-1.3,2.58)(1.58,2.58)
\psline[linecolor=cyan,linewidth=1pt](2.58,-1.3)(2.58,1.58)
\pscircle[linecolor=blue,linewidth=1pt](.9,.9){1.5}
\rput[h](1,-1.5){\cyan\footnotesize$3\sqrt{2}x^2e+F(0)-C$}
\rput[h](1.4,-1){\blue\footnotesize$F(x)$}
\pscircle[linecolor=black,linewidth=1pt](0,0){1}
\rput[h](-1,-1){\footnotesize$F(0)$}
\psaxes[arrows=->](0,0)(-1.2,-1.2)(3,3)
\end{pspicture}}\end{center}

 \
\caption{The calculation of $G_e^u(F(0),F(x))$ geometrically}
\label{fig20}
\end{figure}
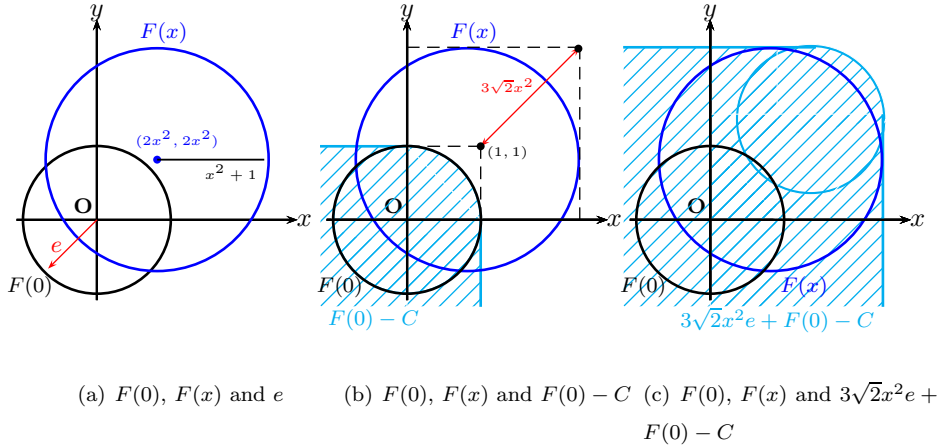

Now, we will find $v_e(F(x))$ for all $x\in[-1,1]$. We need to calculate $G_e^{\ell}(\{0\},F(x))$ and $G_e^u(F(x),\{0\})$ for all $x\in[-1,1]$. If $C$ is moved along the direction $e$ until it covers $F(x)$, then we get value of  $$G_e^{\ell}(\{0\},F(x))=\min\{t\in\R \ | \ F(x)\subset te+C\}=\sqrt{2}(1-x^2)$$ by substracting distance between center of $F(x)$ and $\left(-\frac{\sqrt{2}}{2},-\frac{\sqrt{2}}{2}\right)$, and radius of $F(x)$. As seen in Fig. \ref{fig21}, we get  $G_e^{\ell}(\{0\},F(x))=\sqrt{2}(1-x^2)$.

\begin{figure}[h]
\begin{center}
\psset{Dy=4,Dx=4,xunit=8.9mm,yunit=8.9mm}
\subfigure[$F(x)$, $C$ and $e$]{\begin{pspicture}(-1,-1.3)(3.4,3.4)
\psaxes[arrows=->](0,0)(-1,-1)(3,3)
\rput[h](-0.2,0.2){\black{{\footnotesize\textbf{O}}}}
\pscustom[linecolor=white,linewidth=0.001pt,fillstyle=hlines,ticks=none,hatchwidth=.5pt,hatchcolor=cyan]{
\psline(0,0)(2.9,0)
\psline(2.9,0)(2.9,2.9)
\psline(2.9,2.9)(0,2.9)
\psline(0,2.9)(0,0)}
\psline[linecolor=cyan,linewidth=1pt](0,0)(0,2.9)
\psline[linecolor=cyan,linewidth=1pt](0,0)(2.9,0)
\rput[h](3.1,0){\black{$x$}}
\rput[h](0,3.1){\black{$y$}}
\pscircle[linecolor=blue,linewidth=1pt](.5,.5){1.25}
\rput[h](1,2){\blue\footnotesize$F(x)$}
\rput[h](2.5,3.2){\cyan\footnotesize$C$}
\psline[linecolor=red,linewidth=.5pt]{<-}(-.73,-.73)(0,0)
\psline[linestyle=dashed,linewidth=.5pt](-.75,0)(-.75,-.75)
\psline[linestyle=dashed,linewidth=.5pt](0,-.75)(-.75,-.75)
\rput[h](-.98,.28){\tiny$-\frac{\sqrt{2}}{2}$}
\psdot[linewidth=.5pt](-.75,0)
\rput[h](.3,-.85){\tiny$-\frac{\sqrt{2}}{2}$}
\psdot[linewidth=.5pt](0,-.75)
\rput[h](-.4,-.2){\red\small$e$}
\rput[h](.7,.7){\blue\tiny$(2x^2,2x^2)$}
\psdot[linewidth=.5pt,linecolor=blue](.5,.5)
\psline[linewidth=.7pt](.5,.5)(1.85,.5)
\rput[h](1.3,.3){\tiny$x^2+1$}
\end{pspicture}}
\subfigure[$F(x)$]{\begin{pspicture}(-1,-1.3)(3.4,3.4)
\psaxes[arrows=->](0,0)(-1,-1)(3,3)
\rput[h](-0.2,0.2){\black{{\footnotesize\textbf{O}}}}
\rput[h](3.1,0){\black{$x$}}
\rput[h](0,3.1){\black{$y$}}
\rput[h](1,2){\blue\footnotesize$F(x)$}
\pscircle[linecolor=blue,linewidth=1pt](.5,.5){1.25}
\psline[linecolor=red,linewidth=.5pt]{<->}(0,0)(-.9,-.9)
\rput[h](-1.18,-.5){\red\tiny$\sqrt{2}(1-x^2)$}
\psline[linestyle=dashed,linewidth=.5pt](-.9,.5)(-.9,-.9)
\psline[linestyle=dashed,linewidth=.5pt](.5,-.9)(-.9,-.9)
\end{pspicture}}
\subfigure[$F(x)$ and $\sqrt{2}(1-x^2)e+C$]{\begin{pspicture}(-1,-1)(3.4,3.4)
\psaxes[arrows=->](0,0)(-1,-1)(3,3)
\rput[h](-0.2,0.2){\black{{\footnotesize\textbf{O}}}}
\rput[h](3.1,0){\black{$x$}}
\rput[h](0,3.1){\black{$y$}}
\pscustom[linecolor=white,linewidth=0.001pt,fillstyle=hlines,ticks=none,hatchwidth=.5pt,hatchcolor=cyan]{
\psline(-.9,-.9)(-.9,2.9)
\psline(-.9,2.9)(2.9,2.9)
\psline(2.9,2.9)(2.9,-.9)
\psline(2.9,-.9)(-.9,-.9)}
\psline[linecolor=cyan,linewidth=1pt](-.9,2.9)(-.9,-.9)
\psline[linecolor=cyan,linewidth=1pt](2.9,-.9)(-.9,-.9)
\pscircle[linecolor=blue,linewidth=1pt](.5,.5){1.25}
\rput[h](1.5,3.2){\cyan\footnotesize$\sqrt{2}(1-x^2)e+C$}
\rput[h](1,2){\blue\footnotesize$F(x)$}
\end{pspicture}}\end{center}
\caption{The calculation of $G_e^{\ell}(\{0\},F(x))$ geometrically}
\label{fig21}
\end{figure}

To calculate $G_e^u(F(x),\{0\})$, we can use the formula $$G_e^u(F(x),\{0\})=\max\{t\in\R \ | \ 0\in te+F(x)-C\}.$$ As seen in Fig. \ref{fig22}, this value can be found by adding distance between origin and center of $F(x)$, and radius of $F(x)$. Hence, we get  $G_e^u(F(x),\{0\})=2\sqrt{2}x^2+x^2+1$.
\begin{figure}[h]
\begin{center}
\psset{Dy=4,Dx=4,xunit=9mm,yunit=9mm}
\subfigure[$F(x)$, $C$ and $e$]{\begin{pspicture}(-1,-1.5)(3.4,3.4)
\psaxes[arrows=->](0,0)(-1,-1)(3,3)
\rput[h](-0.2,0.2){\black{{\footnotesize\textbf{O}}}}
\pscustom[linecolor=white,linewidth=0.001pt,fillstyle=hlines,ticks=none,hatchwidth=.5pt,hatchcolor=cyan]{
\psline(0,0)(2.9,0)
\psline(2.9,0)(2.9,2.9)
\psline(2.9,2.9)(0,2.9)
\psline(0,2.9)(0,0)}
\psline[linecolor=cyan,linewidth=1pt](0,0)(0,2.9)
\psline[linecolor=cyan,linewidth=1pt](0,0)(2.9,0)
\rput[h](3.1,0){\black{$x$}}
\rput[h](0,3.1){\black{$y$}}
\pscircle[linecolor=blue,linewidth=1pt](.5,.5){1.25}
\rput[h](1,2){\blue\footnotesize$F(x)$}
\rput[h](2.5,3.2){\cyan\footnotesize$C$}
\psline[linecolor=red,linewidth=.5pt]{<-}(-.73,-.73)(0,0)
\psline[linestyle=dashed,linewidth=.5pt](-.75,0)(-.75,-.75)
\psline[linestyle=dashed,linewidth=.5pt](0,-.75)(-.75,-.75)
\rput[h](-.98,.28){\tiny$-\frac{\sqrt{2}}{2}$}
\psdot[linewidth=.5pt](-.75,0)
\rput[h](.3,-.85){\tiny$-\frac{\sqrt{2}}{2}$}
\psdot[linewidth=.5pt](0,-.75)
\rput[h](-.4,-.2){\red\small$e$}
\rput[h](.7,.7){\blue\tiny$(2x^2,2x^2)$}
\psdot[linewidth=.5pt,linecolor=blue](.5,.5)
\psline[linewidth=.7pt](.5,.5)(1.85,.5)
\rput[h](1.3,.3){\tiny$x^2+1$}
\end{pspicture}}
\subfigure[$F(x)$ and $F(x)-C$]{\begin{pspicture}(-1,-1.5)(3.4,3.4)
\psaxes[arrows=->](0,0)(-1,-1)(3,3)
\rput[h](-0.2,0.2){\black{{\footnotesize\textbf{O}}}}
\rput[h](3.1,0){\black{$x$}}
\rput[h](0,3.1){\black{$y$}}
\rput[h](0,-1.2){\cyan\footnotesize$F(x)-C$}
\pscustom[linecolor=white,linewidth=0.001pt,fillstyle=hlines,ticks=none,hatchwidth=.5pt,hatchcolor=cyan]{
\pscircle[linecolor=blue,fillstyle=solid,hatchwidth=.5pt,fillcolor=white,hatchcolor=white](.5,.5){1.25}\pscurve(1.1,0)(0,1.1)}
\pscustom[linecolor=white,linewidth=0.001pt,fillstyle=hlines,ticks=none,hatchwidth=.5pt,hatchcolor=cyan]{
\psline(-1,1.8)(.2,1.8)
\psline(.2,1.8)(.2,-1)
\psline(.2,-1)(-1,-1)
\psline(-1,-1)(-1,1.8)}
\pscustom[linecolor=white,linewidth=0.001pt,fillstyle=hlines,ticks=none,hatchwidth=.5pt,hatchcolor=cyan]{
\psline(-.2,.2)(.2,-.2)
\psline(.2,-.4)(-.2,-.2)
\psline(-.2,-.2)(-.2,.2)}
\pscustom[linecolor=white,linewidth=0.001pt,fillstyle=hlines,ticks=none,hatchwidth=.5pt,hatchcolor=cyan]{
\psline(1.87,-1)(-.4,-1)
\psline(-.4,-1)(-.4,1)
\psline(-.4,1)(1.87,1)
\psline(1.87,1)(1.87,-1)}
\psline[linecolor=cyan,linewidth=1pt](-1,1.87)(.6,1.87)
\psline[linecolor=cyan,linewidth=1pt](1.87,-1)(1.87,.6)
\pscircle[linecolor=cyan,linewidth=1pt](.5,.5){1.25}
\psline[linecolor=red,linewidth=.5pt]{<->}(0,0)(1.45,1.45)
\rput[h](1.1,1){\red\tiny$2\sqrt{2}x^2+x^2+1$}
\pscircle[linecolor=blue,linewidth=1pt](.5,.5){1.25}
\rput[h](1,2){\blue\footnotesize$F(x)$}
\end{pspicture}}
\subfigure[$F(x)$ and $(2\sqrt{2}x^2+x^2+1)e+F(x)-C$]{\begin{pspicture}(1,1)(-3.4,-3.4)
\psaxes[arrows=->](0,0)(-3,-3)(1,1)
\rput[h](0.2,0.2){\black{{\footnotesize\textbf{O}}}}
\rput[h](1.2,0){\black{$x$}}
\rput[h](0,1.2){\black{$y$}}
\pscustom[linecolor=white,linewidth=0.001pt,fillstyle=hlines,ticks=none,hatchwidth=.5pt,hatchcolor=cyan]{
\pscircle[linecolor=blue,fillstyle=solid,hatchwidth=.5pt,fillcolor=white,hatchcolor=white](-1,-1){1.25}
\psarc(-1,-1){1.25}{0}{90}}
%\pscurve(-1.7,0)(0,-1.7)}
\psline[linecolor=cyan,linewidth=1pt](.35,-1.17)(.35,-2.9)
\psline[linecolor=cyan,linewidth=1pt](-1.17,.35)(-2.9,.35)
\pscircle[linecolor=cyan,linewidth=1pt](-1,-1){1.25}
\pscustom[linecolor=white,linewidth=0.001pt,fillstyle=hlines,ticks=none,hatchwidth=.5pt,hatchcolor=cyan]{
\psline(-2.9,.3)(.3,-1.17)
\psline(.3,-1.17)(.3,-2.9)
\psline(.3,-2.9)(-2.9,-2.9)
\psline(-2.9,-2.9)(-2.3,.3)}
\pscustom[linecolor=white,linewidth=0.001pt,fillstyle=hlines,ticks=none,hatchwidth=.5pt,hatchcolor=cyan]{
\psline(.3,-2.9)(-1.17,.3)
\psline(-1.17,.3)(-2.9,.3)
\psline(-2.9,.3)(-2.9,-2.9)
\psline(-2.9,-2.9)(.3,-2.3)}
\rput[h](-1.5,-3.2){\cyan\footnotesize$(2\sqrt{2}x^2+x^2+1)e+F(x)-C$}
\pscircle[linecolor=blue,linewidth=1pt](-1,-1){1.25}
\rput[h](-1,-2){\blue\footnotesize$F(x)$}
\end{pspicture}}
\end{center}
\caption{The calculation of $G_e^u(F(x),\{0\})$ geometrically}
\label{fig22}
\end{figure}
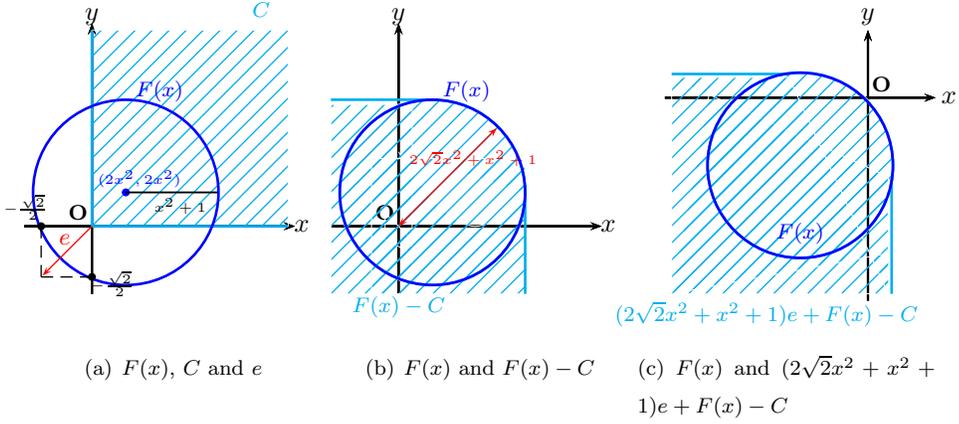

Finally, we have \begin{equation}
\label{eq2}
\begin{array}{ll}
  v_e(F(x)) & =\left(-G_e^{\ell}(\{0\},F(x)),G_e^u(F(x),\{0\})\right) \\
   & =\left(\sqrt{2}(x^2-1),2\sqrt{2}x^2+x^2+1\right).
\end{array}\end{equation}

\end{example}
Consequently, above calculations point out that by choosing a suitable $e\in -int(C)$ we obtained $w_e$ and $v_e$ easily. However, vectorizing function in [19, Example 3.1] was obtained by considering all vectors of the polar cone of $C$.

\newpage

\section*{Gerstewitz vectorization and optimality conditions for $(s-SOP)$}

 \indent $(s-SOP)$ can be replaced by a vector optimization problem using Gerstewitz vectorizing function. In this section, results of previous section are employed to give optimality conditions for $(s-SOP)$ without any convexity assumption and relationships between solutions of $(s-SOP)$ and $(VOP)$ derived by Gerstewitz vectorizing function.

\begin{theorem}
\label{u3}
  Let $F:X\rightrightarrows Y$ be $\mp C$-closed and $\mp C$-bounded valued on $X$. $x_0\in X$ is an $s$-maximal ($s$-minimal) solution of $(s-SOP)$ if and only if there exists an $s$-increasing ($s$-decreasing) function $T:\mathcal{P}^0_{\mp C}(Y)\rightarrow\R^2$ satisfying the following statements:
  \begin{itemize}
    \item[(i)] If $x\in X$ and $F(x)\in[F(x_0)]^s$, then $T(F(x))=(0,0)$,
    \item[(ii)] If $x\in X$ and $F(x)\not\in[F(x_0)]^s$, then $(0,0)\not\leq_{\R^2_+}T(F(x))$,
    \item[(iii)] If $A\in\mathcal{P}_{0\mp C}(Y)$ and $F(x_0)\preceq^sA$ ($A\preceq^sF(x_0)$), then $(0,0)\leq_{\R^2_+}T(A)$.
  \end{itemize}
\end{theorem}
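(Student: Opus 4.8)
The plan is to produce, for the $s$-maximal case, the explicit function $T:=w_e(F(x_0),\cdot)$ for an arbitrarily fixed $e\in-int(C)$, and for the $s$-minimal case the dual choice $T:=w_e(\cdot,F(x_0))$; by (\ref{eq4}), (\ref{eq5}) and the symmetric roles of $G_e^{\ell}$ and $G_e^u$ in the definition of $w_e$ the two cases are formally mirror images, so I would write out only the $s$-maximal one and note that Theorem \ref{u1}(iv) takes the place of Theorem \ref{u1}(v) in the dual argument. (Recall also that a nonempty $\mp C$-bounded set is $C$-proper and $-C$-proper, so $F(x)\in\mathcal{P}^0_{\mp C}(Y)$ for every $x\in X$ and $T(F(x))$ is meaningful.)

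Necessity ($x_0$ $s$-maximal $\Rightarrow$ such a $T$ exists): I would check that $T=w_e(F(x_0),\cdot)$ has all the stated properties using only the results of the previous section. It is $s$-increasing on $\mathcal{P}^0_{\mp C}(Y)$ by Theorem \ref{u1}(v). For (i): if $x\in X$ and $F(x)\in[F(x_0)]^s$, then $F(x_0)\in[F(x)]^s$ as well, and since $F(x_0)$ is $\mp C$-closed, Theorem \ref{ut1}(ii) gives $T(F(x))=w_e(F(x_0),F(x))=(0,0)$. For (iii): if $F(x_0)\preceq^sA$ with $A\in\mathcal{P}^0_{\mp C}(Y)$, then Theorem \ref{ut1}(iii), applied with the $\mp C$-closed set $F(x_0)$, yields $(0,0)\leq_{\R^2_+}w_e(F(x_0),A)=T(A)$. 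For (ii): if $(0,0)\leq_{\R^2_+}T(F(x))$ for some $x\in X$, then Theorem \ref{ut1}(iii) forces $F(x_0)\preceq^sF(x)$, whence, by $s$-maximality of $x_0$, $F(x)\preceq^sF(x_0)$, i.e. $F(x)\in[F(x_0)]^s$; contrapositively this is exactly (ii).

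Sufficiency: suppose such a $T$ exists and take $x\in X$ with $F(x_0)\preceq^sF(x)$. Since $F(x)\in\mathcal{P}^0_{\mp C}(Y)$, property (iii) gives $(0,0)\leq_{\R^2_+}T(F(x))$, and then the contrapositive of (ii) forces $F(x)\in[F(x_0)]^s$, in particular $F(x)\preceq^sF(x_0)$. As $x$ was arbitrary, $F(x_0)$ is an $s$-maximal set of $\mathcal{F}(X)$, i.e. $x_0$ solves $(s-SOP)$. Note that this direction uses only (ii) and (iii); the $s$-monotonicity and (i) are merely the additional structure that the canonical choice above happens to provide.

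I expect the only genuine obstacle to be the codomain requirement $T:\mathcal{P}^0_{\mp C}(Y)\to\R^2$: $w_e(F(x_0),\cdot)$ is a priori $\overline{\R}^2$-valued, and it is finite on $\mp C$-bounded sets (Theorem \ref{u1}(i)) and on $\{A:F(x_0)\preceq^sA\}$ (there $A\subseteq F(x_0)+C$ forces $G_e^{\ell}(F(x_0),A)\leq 0$, and dually the second coordinate is $\geq 0$), but on a badly unbounded $A$ one coordinate may equal $-\infty$. I would remove this by composing each coordinate of $w_e(F(x_0),\cdot)$ with one fixed strictly increasing bijection of $\overline{\R}$ onto a bounded interval that fixes $0$ (say a rescaled $\arctan$): applying the same increasing map to both coordinates preserves $\leq_{\R^2_+}$, the sign of each coordinate, and the value $(0,0)$, so $s$-increasingness and all of (i)--(iii) survive while $T$ now lands in $\R^2$. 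In the $s$-minimal case the analogue of (iii) can instead be verified directly, without assuming $A$ closed: $A\preceq^sF(x_0)$ gives $F(x_0)\subseteq A+C$, hence $\phi_{e,A}(b)\leq 0$ for every $b\in F(x_0)$ and so $-G_e^{\ell}(A,F(x_0))\geq 0$, and symmetrically $G_e^u(F(x_0),A)\geq 0$ follows from $A\preceq^uF(x_0)$.
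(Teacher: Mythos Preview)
Your approach is the same as the paper's: for necessity you take $T=w_e(F(x_0),\cdot)$ and verify (i)--(iii) via Theorem~\ref{u1}(v) and Theorem~\ref{ut1}(ii)--(iii), and for sufficiency you combine (ii) and (iii); the paper argues the latter by contradiction rather than directly, but the content is identical, and it likewise notes $T=w_e(\cdot,F(x_0))$ for the minimal case. You go further than the paper in one respect: the paper simply declares $T:\mathcal{P}^0_{\mp C}(Y)\to\R^2$ without addressing that $w_e(F(x_0),\cdot)$ is only $\overline{\R}^2$-valued on general $\mp C$-proper sets, whereas you flag this and give a legitimate repair (post-composing coordinatewise with an order-preserving, zero-fixing bijection onto a bounded interval), which the paper omits.
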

\begin{proof}
  $(\Longrightarrow)$ Suppose that $x_0$ is an $s$-maximal solution of $(s-SOP)$. Let us fix any $e\in-int(C)$ and consider the function $T:\mathcal{P}^0_{\mp C}(Y)\rightarrow\R^2$ defined as \linebreak  $T(\cdot)=w_e(F(x_0),\cdot)=(-G_e^{\ell}(F(x_0),\cdot),G_e^u(\cdot,F(x_0)))$. By Theorem \ref{u1} (v) $T$ is \linebreak $s$-increasing on $\mathcal{P}^0_{\mp C}(Y)$. Now, we show that $T$ satisfies conditions (i)-(iii).
  \begin{itemize}
    \item[(i)] Since $F(x)\in[F(x_0)]^s$, we have $T(F(x))=w_e(F(x_0),F(x))=(0,0)$ from Theorem \ref{ut1} (ii).
    \item[(ii)] Let $F(x)\not\in[F(x_0)]^s$. Since $x_0$ is an $s$-maximal solution of $(s-SOP)$, we have $F(x_0)\not\preceq^sF(x)$. So, from Theorem \ref{ut1} (iii) we obtain $$(0,0)\not\leq_{\R^2_+}w_e(F(x_0),F(x))=T(F(x)).$$
    \item[(iii)] Assume that $F(x_0)\preceq^sA$. By Theorem \ref{ut1} (iii) we get $$(0,0)\leq_{\R^2_+}w_e(F(x_0),A)=T(A).$$
  \end{itemize}
$(\Longleftarrow)$ Let (i)-(iii) be satisfied for some $T:\mathcal{P}^0_{\mp C}(Y)\rightarrow\R^2$  which is $s$-increasing on $\mathcal{P}^0_{\mp C}(Y)$. Assume the contrary that $x_0$ isn't an $s$-maximal solution of  $(s-SOP)$. Then, there exists $x'\in X$ such that $F(x_0)\preceq^sF(x')$ and $F(x')\not\preceq^sF(x_0)$. Hence, $F(x')\not\in[F(x_0)]^s$. From (ii) we have \begin{equation}
\label{opt1}
(0,0)\not\leq_{\R^2_+}T(F(x')).\end{equation} Since $F(x_0)\preceq^sF(x')$, by (iii) we have $(0,0)\leq_{\R^2_+}T(F(x'))$. This contradicts (\ref{opt1}). Therefore, $x_0$ is an $s$-maximal solution of $(s-SOP)$.

It is enough to take the function $T(\cdot)=w_e(\cdot,F(x_0))$ to prove the minimality of $x_0$.
\end{proof}
\begin{theorem}
\label{u5}
  Let $F:X\rightrightarrows Y$ be $\mp C$-compact valued on $X$. $x_0\in X$ is a weak $s$-maximal (weak $s$-minimal) solution of  $(s-SOP)$ if and only if there exists an $s$-increasing (strictly $s$-decreasing) function $T:\mathcal{P}^0_{\mp C}(Y)\rightarrow\R^2$ satisfying the following statements:
  \begin{itemize}
    \item[(i)] If $x\in X$ and $F(x)\in[F(x_0)]^s$, then $T(F(x))=(0,0)$,
    \item[(ii)] If $x\in X$ and $F(x)\not\in[F(x_0)]^s$, then $(0,0)\not<_{\R^2_+}T(F(x))$,
    \item[(iii)] If $A\in\mathcal{P}^0_{\mp C}(Y)$ is a $\mp C$-compact set and $F(x_0)\prec^sA$ ($A\prec^sF(x_0)$), then $(0,0)<_{\R^2_+}T(A)$.
  \end{itemize}
\end{theorem}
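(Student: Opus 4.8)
The plan is to rerun the argument used to prove Theorem~\ref{u3}, now exploiting $\mp C$-compactness so that the strict results Theorem~\ref{u7} and Theorem~\ref{u2} become available in place of Theorem~\ref{ut1}. For the weak $s$-maximal case I would fix any $e\in-int(C)$ and set
$$T(\cdot)=w_e(F(x_0),\cdot)=\left(-G_e^{\ell}(F(x_0),\cdot),\,G_e^u(\cdot,F(x_0))\right),$$
which is $s$-increasing on $\mathcal{P}^0_{\mp C}(Y)$ by Theorem~\ref{u1}(v); since $F(x_0)$ is $\mp C$-compact, hence $\mp C$-bounded, and every set fed to $T$ in (i)--(iii) is $\mp C$-compact, $T$ is $\R^2$-valued on them by Theorem~\ref{u1}(i). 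For the weak $s$-minimal case I would instead take $T(\cdot)=w_e(\cdot,F(x_0))$, which by Theorem~\ref{u2}(i) is strictly $s$-decreasing on the family of $\mp C$-compact sets, matching the ``$s$-increasing (strictly $s$-decreasing)'' wording of the statement.

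For the ``only if'' implication in the weak $s$-maximal case I would check (i)--(iii) for this $T$. Condition (i) is immediate from Theorem~\ref{ut1}(ii), since $F(x)\in[F(x_0)]^s$ gives $w_e(F(x_0),F(x))=(0,0)$. Condition (iii): if $A$ is $\mp C$-compact and $F(x_0)\prec^sA$, then Theorem~\ref{u7} gives exactly $(0,0)<_{\R^2_+}w_e(F(x_0),A)=T(A)$. Condition (ii) is where weak maximality is used: if $F(x)\notin[F(x_0)]^s$ while, toward a contradiction, $(0,0)<_{\R^2_+}T(F(x))$, then Theorem~\ref{u7} yields $F(x_0)\prec^sF(x)$; weak $s$-maximality of $x_0$ then forces $F(x)\prec^sF(x_0)$; since $int(C)\subset C$, both strict relations imply the non-strict ones, so $F(x_0)\preceq^sF(x)$ and $F(x)\preceq^sF(x_0)$, i.e.\ $F(x)\in[F(x_0)]^s$, a contradiction.

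For the ``if'' implication, assume (i)--(iii) hold for some $s$-increasing $T$ and that $x_0$ is not weak $s$-maximal; then there is $x'\in X$ with $F(x_0)\prec^sF(x')$ and $F(x')\not\prec^sF(x_0)$. Conditions (i) and (ii) together give $(0,0)\not<_{\R^2_+}T(F(x'))$ regardless of whether $F(x')\in[F(x_0)]^s$ (in which case $T(F(x'))=(0,0)$ by (i)) or not (in which case (ii) applies directly), whereas (iii) applied to the $\mp C$-compact set $F(x')$ gives $(0,0)<_{\R^2_+}T(F(x'))$, a contradiction; hence $x_0$ is weak $s$-maximal. The weak $s$-minimal statement is proved symmetrically, using $T(\cdot)=w_e(\cdot,F(x_0))$, Theorem~\ref{u2}(i) in place of Theorem~\ref{u1}(v), and $A\prec^sF(x_0)$ in place of $F(x_0)\prec^sA$. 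I expect the one delicate step to be (ii) in the ``only if'' part: one must upgrade the strict vector inequality $(0,0)<_{\R^2_+}w_e(F(x_0),F(x))$ to the strict set order $F(x_0)\prec^sF(x)$, which is precisely what $\mp C$-compactness buys through Theorem~\ref{u7} (Theorem~\ref{ut1}(iii) covers only the non-strict case), and then observe that $\prec^s$ holding in both directions collapses to membership in $[F(x_0)]^s$; everything else is a routine transcription of the proof of Theorem~\ref{u3}.
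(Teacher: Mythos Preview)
The paper states Theorem~\ref{u5} without proof, implicitly leaving it as the weak analogue of Theorem~\ref{u3}; your proposal supplies exactly that analogue, replacing Theorem~\ref{ut1}(iii) by Theorem~\ref{u7} at the two points where a strict inequality must be traded for a strict set relation, and your argument is correct. One small remark: for the weak $s$-minimal branch you appeal to Theorem~\ref{u2}(i) to get strict $s$-monotonicity of $T(\cdot)=w_e(\cdot,F(x_0))$, but that theorem only guarantees strict $s$-decrease on the family of $\mp C$-compact sets rather than on all of $\mathcal{P}^0_{\mp C}(Y)$; this is a harmless discrepancy already present in the paper's own formulation (and in the codomain $\R^2$ versus $\overline{\R}^2$ issue inherited from Theorem~\ref{u3}), and it does not affect the logic, since monotonicity of $T$ is never invoked in either implication.
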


\begin{theorem}
\label{u110}
Let $F:X\rightrightarrows Y$ be $\mp C$-closed, $\mp C$-bounded valued on $X$ and \linebreak $e\in -int(C)$. $x_0\in X$ is an $s$-maximal ($s$-minimal) solution of $(s-SOP)$  if and only if $x_0$ is a solution of the problem

   $
(VOP_w^s) \left\{ \begin{array}{ll}
\max w_e(F(x_0),F(x)) & \\
 s.t. \ x\in X &
\end{array} \right.
$ $\Bigg(
 \left\{ \begin{array}{ll}
\max w_e(F(x),F(x_0)) & \\
 s.t. \ x\in X &
\end{array} \right.
\Bigg).$

\end{theorem}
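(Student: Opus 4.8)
The plan is to transfer the set-valued optimality of $x_0$ into the vector optimality of the objective of $(VOP_w^s)$ through the order dictionary of Theorem \ref{ut1}, specialising the machinery of Theorem \ref{u3} to the concrete choice $T:=w_e(F(x_0),\cdot)$. Write $g(x):=w_e(F(x_0),F(x))$. Since $F$ is $\mp C$-closed valued, Theorem \ref{ut1}(i) gives $g(x_0)=w_e(F(x_0),F(x_0))=(0,0)$; since $F$ is in addition $\mp C$-bounded valued, Theorem \ref{u1}(i) keeps $g(x)\in\R^2$ for every $x\in X$, so $(VOP_w^s)$ is a genuine $\R^2$-valued maximization problem, and Theorem \ref{u1}(v) shows $T$ is $s$-increasing. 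By Definition 1, $x_0$ solves $(VOP_w^s)$ exactly when there is no $x\in X$ with $(0,0)\leq_{\R^2_+}g(x)$ and $g(x)\neq(0,0)$.

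For the implication that an $s$-maximal solution $x_0$ solves $(VOP_w^s)$, I would reuse the proof of Theorem \ref{u3}: under $s$-maximality, this $T$ satisfies conditions (i) and (ii) of that theorem, namely $g(x)=(0,0)$ whenever $F(x)\in[F(x_0)]^s$ and $(0,0)\not\leq_{\R^2_+}g(x)$ otherwise. Taking the contrapositive of (i), any $x$ with $g(x)\neq g(x_0)=(0,0)$ has $F(x)\notin[F(x_0)]^s$, hence $(0,0)\not\leq_{\R^2_+}g(x)$ by (ii); so no $x$ can improve on $x_0$. This half is routine.

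The converse is where the real work sits. Suppose $x_0$ solves $(VOP_w^s)$ but $F(x_0)$ is not $s$-maximal in $\mathcal{F}(X)$: then some $x'\in X$ satisfies $F(x_0)\preceq^sF(x')$ and $F(x')\not\preceq^sF(x_0)$, so $F(x')\notin[F(x_0)]^s$, while Theorem \ref{ut1}(iii) still yields $(0,0)\leq_{\R^2_+}g(x')$. To contradict the vector optimality of $x_0$ one must show $g(x')\neq(0,0)$, and I expect this to be the main obstacle: the equalities $-G_e^{\ell}(F(x_0),F(x'))=0$ and $G_e^u(F(x'),F(x_0))=0$ only encode $F(x_0)\preceq^{\ell}F(x')$ and $F(x_0)\preceq^uF(x')$ respectively — exactly the relations already in hand — so $g(x')=(0,0)$ is not on its face incompatible with $F(x')\notin[F(x_0)]^s$. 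I would attempt to close this by proving a sharpened converse to Theorem \ref{ut1}(ii)--(iii): that for $\mp C$-closed (and, if needed, $\mp C$-compact) sets the equality $g(x')=(0,0)$ forces $F(x')\in[F(x_0)]^s$, perhaps invoking the strict monotonicity of Theorem \ref{u2}; and, should that identification genuinely need more than $\mp C$-closedness and $\mp C$-boundedness, the hypotheses on $F$ would have to be adjusted accordingly.

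The $s$-minimal statement, with objective $w_e(F(x),F(x_0))$, is the mirror image: replace the $s$-increasing property (Theorem \ref{u1}(v)) by the $s$-decreasing one (Theorem \ref{u1}(iv)), keep $w_e(F(x_0),F(x_0))=(0,0)$ from Theorem \ref{ut1}(i), and rerun both implications with the two arguments of $w_e$ interchanged; alternatively reduce it to the $s$-maximal case via the symmetries $A\preceq^{\ell}B\Longleftrightarrow-B\preceq^u-A$ of (\ref{eq4}) and $A\in[B]^{\ell}\Longleftrightarrow-A\in[-B]^u$ of (\ref{eq5}).
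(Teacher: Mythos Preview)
Your forward direction is exactly the paper's: it simply says ``It is a result of Theorem \ref{u3}'', i.e., one takes $T(\cdot)=w_e(F(x_0),\cdot)$ and reads off properties (i) and (ii) of Theorem \ref{u3}, which is precisely the argument you spell out.

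For the converse, the paper's proof is much shorter than yours and does \emph{not} engage with the obstacle you isolate. It reads in full: ``Let $x_0$ be a solution of $(VOP_w^s)$. Then, we have $(0,0)\not\leq_{\R^2_+}w_e(F(x_0),F(x))$ for all $x\in X$ where $F(x)\not\in[F(x_0)]^s$. By Theorem \ref{ut1} (iii) $F(x_0)\not\preceq^sF(x)$ for all $x\in X$ where $F(x)\not\in[F(x_0)]^s$. Thus, $x_0$ is a solution of $(s-SOP)$.'' In other words, the paper passes directly from ``$x_0$ is maximal for $g$'' to ``$(0,0)\not\leq_{\R^2_+}g(x)$ whenever $F(x)\notin[F(x_0)]^s$'', tacitly using that $F(x)\notin[F(x_0)]^s$ forces $g(x)\neq(0,0)$. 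That is exactly the implication you flag as the ``main obstacle'' and propose to establish via a sharpened converse to Theorem \ref{ut1}(ii); the paper simply assumes it.

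Your instinct here is correct, and in fact the worry is substantive: with $Y=\R^2$, $C=\R^2_+$, $e=(-1,-1)$, $F(x_0)=\{(0,0)\}$ and $F(x_1)=\{(0,0),(1,0)\}$ one checks that both sets are $\mp C$-closed and $\mp C$-bounded, that $F(x_0)\preceq^s F(x_1)$ while $F(x_1)\not\preceq^s F(x_0)$, and yet $w_e(F(x_0),F(x_1))=(0,0)$. So under the stated hypotheses the step the paper takes without comment is not automatic, and your plan to seek extra structure (a genuine converse of Theorem \ref{ut1}(ii), possibly under stronger assumptions) is the right reflex rather than an over-complication. The $s$-minimal case is, as you say, symmetric.
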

\begin{proof}
  $(\Longrightarrow)$ It is a result of Theorem \ref{u3}.

  $(\Longleftarrow)$ Let $x_0$ be a solution of $(VOP_w^s)$. Then, we have $(0,0)\not\leq_{\R^2_+}w_e(F(x_0),F(x))$ for all $x\in X$ where $F(x)\not\in[F(x_0)]^s$. By Theorem \ref{ut1} (iii) $F(x_0)\not\preceq^sF(x)$ for all $x\in X$ where $F(x)\not\in[F(x_0)]^s$. Thus, $x_0$ is a solution of $(s-SOP)$.
\end{proof}
\begin{theorem}
   Let $F:X\rightrightarrows Y$ be $\mp C$-compact valued on $X$ and $e\in -int(C)$. $x_0\in X$ is a weak $s$-maximal (weak $s$-minimal) solution of $(s-SOP)$ if and only if $x_0$ is a strongly solution of the problem
$$
(VOP_w^s) \left\{ \begin{array}{ll}
\max w_e(F(x_0),F(x)) & \\
 s.t. \ x\in X &
\end{array} \right.
\Bigg(
 \left\{ \begin{array}{ll}
\max w_e(F(x),F(x_0)) & \\
 s.t. \ x\in X &
\end{array} \right.
\Bigg).$$
\end{theorem}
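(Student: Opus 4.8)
The plan is to transcribe the proof of Theorem~\ref{u110} into the ``strict'' setting, using Theorem~\ref{u5} where that argument used Theorem~\ref{u3}, and Theorem~\ref{u7} where it used Theorem~\ref{ut1}~(iii). I carry out the weak $s$-maximal case; the weak $s$-minimal case is identical after exchanging the objective $w_e(F(x_0),F(x))$ for $w_e(F(x),F(x_0))$, i.e.\ working with $T(\cdot)=w_e(\cdot,F(x_0))$ and the ``min'' clauses of the cited results. The hypothesis that $F$ is $\mp C$-compact valued is used at the outset: each $F(x)$ (and $F(x_0)$) is then $\mp C$-closed and $\mp C$-bounded, so by Theorem~\ref{u1}~(i) the map $x\mapsto w_e(F(x_0),F(x))$ is $\R^2$-valued and $(VOP_w^s)$ is a genuine vector optimization problem, while by Theorem~\ref{ut1}~(i) its value at $x_0$ equals $w_e(F(x_0),F(x_0))=(0,0)$.

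For ``$x_0$ weak $s$-maximal $\Rightarrow$ $x_0$ strongly solution of $(VOP_w^s)$'' I would use the explicit function $T(\cdot)=w_e(F(x_0),\cdot)$, which is $s$-increasing by Theorem~\ref{u1}~(v). As in the construction underlying the forward implication of Theorem~\ref{u5} (itself a mirror of that of Theorem~\ref{u3}), this $T$ satisfies conditions (i)--(iii) of Theorem~\ref{u5}: by Theorem~\ref{ut1}~(ii) one has $w_e(F(x_0),F(x))=(0,0)$ whenever $F(x)\in[F(x_0)]^s$, and clause (ii) of Theorem~\ref{u5} gives $(0,0)\not<_{\R^2_+}w_e(F(x_0),F(x))$ whenever $F(x)\notin[F(x_0)]^s$. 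The target is then the coordinatewise inequality $w_e(F(x_0),F(x))\leq_{\R^2_+}(0,0)=w_e(F(x_0),F(x_0))$ for all $x\in X$, which is exactly the assertion that $x_0$ is a (maximal) strongly solution of $(VOP_w^s)$; extracting this sharper inequality from the two cases above is the crux, discussed below.

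For the converse I would argue directly. If $x_0$ is a strongly solution of $(VOP_w^s)$ then $w_e(F(x_0),F(x))\leq_{\R^2_+}w_e(F(x_0),F(x_0))=(0,0)$ for every $x\in X$, hence a fortiori $(0,0)\not<_{\R^2_+}w_e(F(x_0),F(x))$ for every $x$. Since $F(x_0)$ and each $F(x)$ are $\mp C$-compact, Theorem~\ref{u7} then yields $F(x_0)\not\prec^s F(x)$ for all $x\in X$, so the implication defining weak $s$-maximality, ``$F(x_0)\prec^s F(x)\Rightarrow F(x)\prec^s F(x_0)$'', holds vacuously. Hence $F(x_0)$ is a weak $s$-maximal set of $\mathcal{F}(X)$ and $x_0$ is a weak $s$-maximal solution of $(s-SOP)$.

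The main obstacle is the last step of the first implication. Clause (ii) of Theorem~\ref{u5} only forbids $(0,0)<_{\R^2_+}w_e(F(x_0),F(x))$, i.e.\ both components of $w_e(F(x_0),F(x))$ being strictly positive, whereas a strongly solution of $(VOP_w^s)$ requires the stronger $w_e(F(x_0),F(x))\leq_{\R^2_+}(0,0)$, i.e.\ both components being $\leq 0$. Bridging this is precisely where the $\mp C$-compactness of the images must be used in full: through Theorems~\ref{u7} and~\ref{ut1}, and behind them the component-wise statements of Corollary~3.11 and Theorem~3.10 of \cite{Marin}, which control $-G_e^{\ell}(F(x_0),F(x))$ and $G_e^u(F(x),F(x_0))$ separately (namely $G_e^{\ell}(A,B)<0\Leftrightarrow A\prec^{\ell}B$ and $G_e^u(B,A)>0\Leftrightarrow A\prec^uB$, the two conditions whose conjunction is $A\prec^sB$ for $\mp C$-compact sets). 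I would concentrate the work there; the remaining steps are a faithful copy of the proof of Theorem~\ref{u110}.
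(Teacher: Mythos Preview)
Your treatment of the converse is correct and slightly cleaner than the paper's hint: the paper merely writes ``can be proved by using Theorem~\ref{ut1}~(iii)'', whereas your route via Theorem~\ref{u7} (i.e.\ $(0,0)<_{\R^2_+}w_e(F(x_0),F(x))\Leftrightarrow F(x_0)\prec^sF(x)$ under $\mp C$-compactness) is the natural tool and yields weak $s$-maximality vacuously, exactly as you describe.

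The obstacle you isolate in the forward implication, however, is genuine and cannot be removed. The paper's proof is just ``It is a result of Theorem~\ref{u5}'', but Theorem~\ref{u5} (with $T=w_e(F(x_0),\cdot)$) only delivers $(0,0)\not<_{\R^2_+}w_e(F(x_0),F(x))$ for $F(x)\notin[F(x_0)]^s$, i.e.\ \emph{at least one} coordinate is $\le 0$; being a \emph{strongly} maximal solution requires $w_e(F(x_0),F(x))\leq_{\R^2_+}(0,0)$, i.e.\ \emph{both} coordinates $\le 0$. These are not equivalent, and in fact the stated forward direction fails. Take $Y=\R^2$, $C=\R^2_+$, $e=(-1,-1)$, $X=\{0,1\}$, $F(0)=\{(0,0),(2,2)\}$, $F(1)=\{(1,1)\}$ (finite, hence $\mp C$-compact). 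Then $F(0)\prec^{\ell}F(1)$ but $F(0)\not\prec^{u}F(1)$, so $F(0)\not\prec^{s}F(1)$ and $x_0=0$ is (vacuously) a weak $s$-maximal solution of $(s\text{-}SOP)$. A direct computation gives $w_e(F(0),F(1))=(1,-1)$, which is \emph{not} $\leq_{\R^2_+}(0,0)=w_e(F(0),F(0))$; hence $x_0=0$ is not a strongly solution of $(VOP_w^s)$. What Theorem~\ref{u5} does support is the weaker equivalence ``$x_0$ weak $s$-maximal $\Leftrightarrow$ $x_0$ is a \emph{weak} maximal solution of $(VOP_w^s)$'' (no $x$ with $(0,0)<_{\R^2_+}w_e(F(x_0),F(x))$), paralleling the use of ``solution'' (rather than ``strongly solution'') in Theorem~\ref{u110}. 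So your instinct that the componentwise analysis is where the difficulty lives is exactly right; the step simply does not go through as stated.
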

\begin{proof}

 $(\Longrightarrow)$ It is a result of Theorem \ref{u5}.

 $(\Longleftarrow)$ It can be proved by using Theorem \ref{ut1} (iii).
\end{proof}

\begin{corollary}
\label{teo2}
 Let $F:X\rightrightarrows Y$ be $\mp C$-closed and $\mp C$-bounded valued on $X$ and
 \begin{equation}
  \label{son5}
    F(x)\preceq^sF(y) \text{ or } F(y)\preceq^sF(x) \ \text{ for all }x,y\in X.
  \end{equation} If $x_0\in X$ is an $s$-maximal ($s$-minimal) solution of $(s-SOP)$, then it is also a strongly solution of the problem: $$(VOP_v^s)\left\{
  \begin{array}{c}
  \max v_e(F(x)) \\
  s.t. \ x\in X \\
   \end{array}
   \right.\Bigg(\left\{
   \begin{array}{c}
   \min v_e(F(x)) \\
   s.t. \ x\in X \\
   \end{array}
    \right.\Bigg).$$
\end{corollary}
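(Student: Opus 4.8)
The plan is to reduce the claim to the already-proven Theorem \ref{u110} together with the monotonicity properties of $v_e$ recorded after its definition. The corollary asserts that under the additional hypothesis (\ref{son5}) — that every pair of image sets is comparable with respect to $\preceq^s$ — an $s$-maximal solution $x_0$ of $(s-SOP)$ is in fact a \emph{strongly} solution of $(VOP_v^s)$, i.e. $v_e(F(x))\leq_{\R^2_+}v_e(F(x_0))$ for \emph{all} $x\in X$, not merely that no $F(x)$ strictly dominates $F(x_0)$.

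First I would fix $e\in-int(C)$ and take an arbitrary $x\in X$. By the comparability hypothesis (\ref{son5}), either $F(x)\preceq^sF(x_0)$ or $F(x_0)\preceq^sF(x)$. In the first case, since $v_e(\cdot)=w_e(\{0\},\cdot)$ is $s$-increasing on $\mathcal{P}^0_{\mp C}(Y)$ (as noted right after the definition of $v_e$, following from Theorem \ref{u1}(v)), we immediately get $v_e(F(x))\leq_{\R^2_+}v_e(F(x_0))$, which is exactly the strong-maximality inequality. In the second case, $F(x_0)\preceq^sF(x)$; because $x_0$ is an $s$-maximal solution of $(s-SOP)$, the definition of $s$-maximal set forces $F(x)\preceq^sF(x_0)$ as well, i.e. $F(x)\sim^sF(x_0)$, so $F(x)\in[F(x_0)]^s$. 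Then $v_e(F(x))=v_e(F(x_0))$ by the equivalence-class property of $v_e$ stated in the paragraph after its definition (equivalently, via Theorem \ref{ut1}(ii) applied to $w_e(\{0\},\cdot)$), and in particular $v_e(F(x))\leq_{\R^2_+}v_e(F(x_0))$. Since $x$ was arbitrary, $x_0$ is a strongly solution of $(VOP_v^s)$.

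For the $s$-minimal (parenthetical) case I would argue symmetrically: now $v_e$ is still $s$-increasing, $x_0$ is an $s$-minimal solution, and for arbitrary $x\in X$ comparability gives $F(x_0)\preceq^sF(x)$ or $F(x)\preceq^sF(x_0)$; the first case yields $v_e(F(x_0))\leq_{\R^2_+}v_e(F(x))$ by $s$-increasingness, while the second case combined with $s$-minimality of $x_0$ forces $F(x)\in[F(x_0)]^s$ and hence $v_e(F(x))=v_e(F(x_0))$. Either way $v_e(F(x_0))\leq_{\R^2_+}v_e(F(x))$ for all $x$, which is the definition of a strongly solution of the $\min$ version.

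I do not anticipate a serious obstacle here: the statement is essentially a packaging of monotonicity plus the definition of comparable $s$-extremal sets, and all the analytic content lives in the earlier results ($v_e$ is well-defined and $\R^2$-valued because $F$ is $\mp C$-closed and $\mp C$-bounded valued, cf. Theorem \ref{u1}(i)). The one point that deserves a careful sentence is why $F(x_0)\preceq^sF(x)$ \emph{and} $x_0$ being $s$-maximal actually yield $F(x)\in[F(x_0)]^s$ — this is precisely Definition of $s$-maximal set read correctly ($A\preceq^s B$ with $B$ $s$-maximal implies $B\preceq^s A$), so it should be invoked explicitly rather than glossed over.
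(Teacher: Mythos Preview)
Your argument is correct and follows essentially the same approach as the paper: both proofs show that $F(x)\preceq^sF(x_0)$ for every $x\in X$ (using comparability plus the definition of $s$-maximality) and then apply the $s$-increasingness of $v_e$. The only cosmetic differences are that the paper splits into cases according to whether $F(x)\in[F(x_0)]^s$, whereas you split according to which side of (\ref{son5}) holds, and your mention of Theorem \ref{u110} in the plan is not actually used or needed.
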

\begin{proof}
  Let $x_0$ be an $s$-maximal solution of $(s-SOP)$. If $F(x)\in[F(x_0)]^s$, then $F(x)\preceq^sF(x_0)$. If $F(x)\not\in[F(x_0)]^s$, then from $s$-maximality of $x_0$ and (\ref{son5}) we have $F(x)\preceq^sF(x_0)$. So, we obtain $F(x)\preceq^sF(x_0)$ for all $x\in X$. Since $v_e(\cdot)$ is $s$-increasing, we get $v_e(F(x))\leq_{\R^2_+}v_e(F(x_0))$ for all $x\in X$. Therefore, $x_0$ is a strongly solution of $(VOP_v^s)$.
\end{proof}
The following example shows that the condition (\ref{son5}) is necessary in Corollary \ref{teo2}.
\begin{example}
  Let $Y=\R^2$, $C=\R^2_+$, $X=\{1,2\}$, $A=[1,2]\times\{1\}$, $B=\{(\frac{3}{2},2)\}$, $F:X\rightrightarrows Y$ be defined as $F(1)=A$, $F(2)=B$. Consider the problem
  $$(s-SOP)\left\{
  \begin{array}{ll}
    \max F(x) & \\
    s.t. \ x\in \{1,2\}. &  \\
        \end{array}
        \right.$$

\begin{figure}[h]
\begin{center}
\begin{pspicture}(-2,-2)(3.1,3.1)
\psaxes[arrows=->](0,0)(-2,-2)(3,3)
\rput[h](-0.3,-0.3){\black{{\textbf{O}}}}
\rput[h](3.2,0){\black{\large$x$}}
\rput[h](0,3.2){\black{\large$y$}}
\psline[showpoints=false,linewidth=1pt,fillstyle=vlines,linecolor=blue](1,1)(2,1)
\psdot[linewidth=.5pt,linecolor=red](1.5,2)
\psdot[linewidth=.5pt,linecolor=blue](1,1)
\psdot[linewidth=.5pt,linecolor=blue](2,1)
\rput[h](1.5,1.2){\blue$A$}
\rput[h](1.5,2.3){\red$B$}
\end{pspicture}\end{center}
\caption{Image sets of $F$}\label{sekil145}
\end{figure}
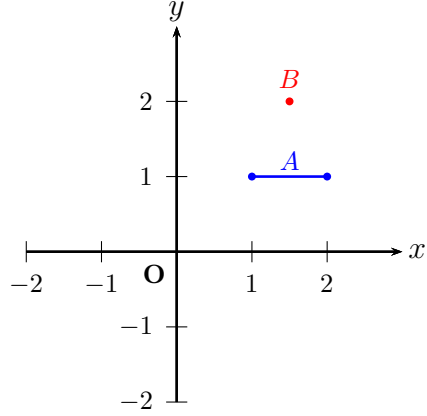
\end{example}
As seen in Fig. \ref{sekil145}, $A\not\preceq^sB$ and $B\not\preceq^sA$, i.e., (\ref{son5}) is not satisfied for this problem. Since $A\not\preceq^sB$ and $B\not\preceq^sA$, solutions of $(s-SOP)$ are 1 and 2.

Let us choose $e=(-1,-1)$. We have $$v_e(F(1))=(-G_e^{\ell}(\{0\},F(1)),G_e^u(F(1),\{0\}))=(1,1)$$ and
   $$v_e(F(2))=(-G_e^{\ell}(\{0\},F(2)),G_e^u(F(2),\{0\}))=\left(\frac{3}{2},\frac{3}{2} \right).$$
But, the unique solution of the problem $$(VOP_v^s)\left\{
\begin{array}{c}
\max v_e(F(x)) \\
 s.t. \ x\in X. \\
 \end{array}
 \right.$$
is $x_0=2$. $x_1=1$ is a solution of $(s-SOP)$, but it can not be obtained by Gerstewitz vectorization.
\begin{corollary}
   Let $F:X\rightrightarrows Y$ be $\mp C$-compact valued on $X$ and
   \begin{equation}
  \label{son55}
    F(x)\prec^sF(y) \text{ or } F(y)\prec^sF(x) \ \text{ for all }x,y\in X.
  \end{equation} If $x_0\in X$ is a weak  $s$-maximal (weak $s$-minimal) solution of $(s-SOP)$, then it is also a strongly solution of the problem $$(VOP_v^s)\left\{
  \begin{array}{c}
  \max v_e(F(x)) \\
  s.t \ x\in X \\
   \end{array}
   \right.\Bigg(\left\{
   \begin{array}{c}
   \min v_e(F(x)) \\
   s.t. \ x\in X \\
   \end{array}
    \right.\Bigg).$$
\end{corollary}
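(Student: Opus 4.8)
The plan is to mirror the proof of Corollary \ref{teo2}, replacing the non-strict set less order relation and the $s$-increasing monotonicity of $v_e$ by their strict counterparts. First I would suppose $x_0\in X$ is a weak $s$-maximal solution of $(s-SOP)$ and take an arbitrary $x\in X$. I would split into two cases according to whether $F(x)\in[F(x_0)]^s$ or not. In the first case, $F(x)\in[F(x_0)]^s$ gives in particular $F(x)\preceq^sF(x_0)$, so $v_e(F(x))\leq_{\R^2_+}v_e(F(x_0))$ by the $s$-increasing property of $v_e$ (recorded in the text right after the definition of $v_e$). In the second case, $F(x)\notin[F(x_0)]^s$, so by the dichotomy \eqref{son55} we have either $F(x)\prec^sF(x_0)$ or $F(x_0)\prec^sF(x)$; the latter together with weak $s$-maximality of $x_0$ would force $F(x)\prec^sF(x_0)$ as well, hence in either sub-case $F(x)\prec^sF(x_0)$.

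Next, from $F(x)\prec^sF(x_0)$ and the fact that $F$ is $\mp C$-compact valued, I would invoke the strict $s$-increasing property of $v_e$ on the family of $\mp C$-compact sets (also stated in the text after the definition of $v_e$, and underpinned by Theorem \ref{u2}) to conclude $v_e(F(x))<_{\R^2_+}v_e(F(x_0))$, which in particular yields $v_e(F(x))\leq_{\R^2_+}v_e(F(x_0))$. Combining the two cases, $v_e(F(x))\leq_{\R^2_+}v_e(F(x_0))$ for all $x\in X$, which is exactly the definition of $x_0$ being a (maximal) strongly solution of $(VOP_v^s)$. The weak $s$-minimal case is entirely symmetric: one obtains $F(x_0)\prec^sF(x)$ (or $F(x_0)\sim^sF(x)$) for all $x$, hence $v_e(F(x_0))\leq_{\R^2_+}v_e(F(x))$ for all $x\in X$, so $x_0$ is a minimal strongly solution of $(VOP_v^s)$.

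I do not expect any genuine obstacle here, since all the heavy lifting is already packaged in the monotonicity statements for $v_e$ and in Theorems \ref{u1}, \ref{u2}, \ref{ut1}, \ref{u7}. The only point requiring a little care is the case $F(x)\in[F(x_0)]^s$: here one cannot apply strict monotonicity (there is no strict inequality $F(x)\prec^sF(x_0)$), so one must fall back on the ordinary $s$-increasing property to still get the non-strict inequality $v_e(F(x))\leq_{\R^2_+}v_e(F(x_0))$; fortunately a strongly solution only requires the non-strict inequality, so this is harmless. One should also note that $F(x)\in\mathcal{P}^0_{\mp C}(Y)$ for every $x$, since $\mp C$-compactness implies $\mp C$-closedness and $\mp C$-boundedness and hence $C$-properness and $-C$-properness, so that $v_e(F(x))$ is a well-defined element of $\R^2$ (again by Theorem \ref{u1}(i)), keeping the comparison $\leq_{\R^2_+}$ meaningful throughout.
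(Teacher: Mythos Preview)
Your proposal is correct and follows essentially the same approach as the paper, whose own proof is the single line ``It can be proved using strictly monotonicity of $v_e(\cdot)$.'' Your argument simply unpacks this: under hypothesis~\eqref{son55} and weak $s$-maximality one obtains $F(x)\prec^sF(x_0)$ (or $F(x)\sim^sF(x_0)$) for every $x$, and then the (strict) $s$-increasing property of $v_e$ on $\mp C$-compact sets yields $v_e(F(x))\leq_{\R^2_+}v_e(F(x_0))$; your extra care with the case $F(x)\in[F(x_0)]^s$ and with well-definedness of $v_e$ is harmless overcaution rather than a different method.
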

\begin{proof}
  It can be proved using strictly monotonicity of $v_e(\cdot)$.
\end{proof}
\begin{theorem}
\label{th1}
  Let $F:X\rightrightarrows Y$ be $\mp C$-closed, $\mp C$-bounded valued on $X$ and \linebreak $v_e(F(x))\neq v_e(F(y))$ for all $x,y\in X$, $x\neq y$. If $x_0\in X$ is a maximal (minimal) solution of $(VOP_v^s)$, then it is also an $s$-maximal ($s$-minimal) solution of $(s-SOP)$.
\end{theorem}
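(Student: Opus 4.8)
The plan is to prove the statement by contradiction, relying on exactly two facts recalled earlier: that $v_e(\cdot)$ is $s$-increasing on $\mathcal{P}^0_{\mp C}(Y)$ (noted just after the definition of $v_e$), and the standing injectivity hypothesis $v_e(F(x))\neq v_e(F(y))$ whenever $x\neq y$. No convexity or compactness enters. Before starting I would observe that the hypotheses on $F$ make $(VOP_v^s)$ well posed: $\mp C$-closedness and $\mp C$-boundedness of the values give $F(x)\in\mathcal{P}^0_{\mp C}(Y)$ for every $x$, and Theorem \ref{u1}(i), applied with the $\mp C$-bounded set $\{0\}$, shows $v_e\circ F$ is a genuine $\R^2$-valued function, so that $\leq_{\R^2_+}$ and the maximality/minimality notions of the Preliminaries apply to it.

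For the $s$-maximal case I would assume $x_0$ is a maximal solution of $(VOP_v^s)$ in its $\max$ form, write $f:=v_e\circ F$, and suppose for contradiction that $x_0$ is not an $s$-maximal solution of $(s-SOP)$. By the definition of an $s$-maximal set there is then $x'\in X$ with $F(x_0)\preceq^sF(x')$ and $F(x')\not\preceq^sF(x_0)$; reflexivity of $\preceq^s$ forces $F(x')\neq F(x_0)$, hence $x'\neq x_0$. Since $v_e(\cdot)$ is $s$-increasing, $F(x_0)\preceq^sF(x')$ yields $f(x_0)\leq_{\R^2_+}f(x')$, and since $x'\neq x_0$ the injectivity hypothesis yields $f(x')\neq f(x_0)$. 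Together these two statements exhibit $x'$ as an element violating the maximality of $x_0$ for $(VOP_v^s)$, a contradiction; hence $x_0$ is $s$-maximal for $(s-SOP)$.

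The $s$-minimal case runs symmetrically: if $x_0$ is a minimal solution of the $\min$ form of $(VOP_v^s)$ but not $s$-minimal for $(s-SOP)$, pick $x'\in X$ with $F(x')\preceq^sF(x_0)$ and $F(x_0)\not\preceq^sF(x')$, deduce $x'\neq x_0$ exactly as above, use $s$-monotonicity of $v_e(\cdot)$ to get $v_e(F(x'))\leq_{\R^2_+}v_e(F(x_0))$, and use injectivity to get $v_e(F(x'))\neq v_e(F(x_0))$, contradicting minimality of $x_0$. I do not anticipate a genuine obstacle; the only delicate point is extracting $x'\neq x_0$ so that the injectivity hypothesis becomes applicable, and this is precisely where that hypothesis is indispensable: without it one could have $v_e(F(x'))=v_e(F(x_0))$ while $F(x_0)\preceq^sF(x')$ and $F(x')\not\preceq^sF(x_0)$, letting $x_0$ be maximal for $(VOP_v^s)$ yet fail to be $s$-maximal, a converse-type gap already reminiscent of the example following Corollary \ref{teo2}.
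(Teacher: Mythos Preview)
Your proof is correct and rests on the same two ingredients as the paper's: the $s$-monotonicity of $v_e$ and the injectivity hypothesis. The only difference is packaging: you argue by contradiction and invoke the $s$-increasing property of $v_e$ as a single black box, whereas the paper argues directly, unpacking $v_e(F(x_0))\not\leq_{\R^2_+}v_e(F(x))$ into its two scalar components and then applying the componentwise monotonicity of $G_e^{\ell}(\{0\},\cdot)$ and $G_e^u(\cdot,\{0\})$ (via the contrapositive) to conclude $F(x_0)\not\preceq^{\ell}F(x)$ or $F(x_0)\not\preceq^uF(x)$. Your route is a bit cleaner since it uses only the already-recorded fact that $v_e$ is $s$-increasing, while the paper essentially re-derives that fact at the component level; conversely, the paper's version makes explicit where each half of the $\preceq^s$ relation fails.
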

\begin{proof}
 Let $x_0$ be a maximal solution of $(VOP_v^s)$. Then, we have \linebreak $v_e(F(x_0))\not\leq_{\R^2_+}v_e(F(x))$ for all $x\in X$ such that $F(x)\not\in[F(x_0)]^s$. As $v_e(\cdot)$ is \linebreak $s$-increasing, we get   $G_e^{\ell}(\{0\},F(x))\not\leq G_e^{\ell}(\{0\},F(x_0))$ or $G_e^u(F(x_0),\{0\})\not\leq G_e^u(F(x),\{0\})$. Since $G_e^{\ell}(\{0\},\cdot)$ and $G_e^u(\cdot,\{0\})$ are $\ell$-decreasing, we have $F(x_0)\not\preceq^{\ell}F(x)$ or \linebreak $F(x_0)\not\preceq^uF(x)$ (from (\ref{eq4})), respectively. Hence, we obtain $F(x_0)\not\preceq^sF(x)$ for all $x\in X$ such that $F(x)\not\in[F(x_0)]^s$. Therefore, $x_0$ is an $s$-maximal solution of $(s-SOP)$.
\end{proof}

We construct Gerstewitz vectorization for $(s-SOP)$ given in [19, Example 3.1] with convex objective map in the following example.

\begin{example}
  Let $Y=\R^2$, $C=\R^2_+$ and $F:[-1,1]\rightrightarrows Y$ be defined as  $$F(x):=\left\{(y_1,y_2)\in\R^2 \ | \ (y_1-2x^2)^2+(y_2-2x^2)^2\leq(x^2+1)^2\right\}$$ for all $x\in[-1,1]$. Consider,

$$(s-SOP)\left\{ \begin{array}{ll}
\min F(x) & \\
 s.t. \ x\in [-1,1]. &
\end{array} \right.$$
 Because $F(x)=F(-x)$ for all $x\in[-1,1]$, we consider the problem

$$(s-SOP)\left\{ \begin{array}{ll}
\min F(x) & \\
 s.t. \ x\in [0,1]. &
\end{array} \right.$$

Let us choose $e=\left(-\frac{\sqrt{2}}{2},-\frac{\sqrt{2}}{2}\right)$ and consider the problem

 $$
(VOP_v^s)
 \left\{ \begin{array}{ll}
\min v_e(F(x)) & \\
 s.t. \ x\in [0,1]. &
\end{array} \right.$$

 As seen in Fig. \ref{fig18} and Fig. \ref{fig15}, $F:X\rightrightarrows Y$ is $\mp C$-closed, $\mp C$-bounded valued on $X$ and $v_e(F(x))\neq v_e(F(y))$ for all $x\neq y$ and $x,y\in [0,1]$. From (\ref{eq2}) we have $$v_e(F(x))=\left(\sqrt{2}(x^2-1),2\sqrt{2}x^2+x^2+1\right).$$

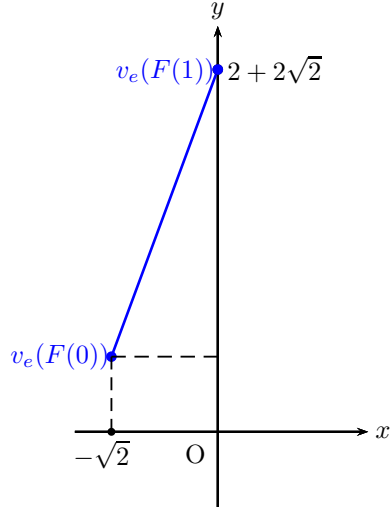
\begin{figure}[h]
\begin{center}
\begin{pspicture}(-2,-1)(2,5.5)
\psaxes[Dx=12,Dy=11,arrows=->](0,0)(-1.9,-1)(2,5.4)
\rput[h](-0.3,-0.3){\black{{O}}}
\psline[linewidth=1pt,fillstyle=vlines,linecolor=blue](-1.414,1)(0,4.82)
\rput[h](2.2,0){\black{$x$}}
\rput[h](0,5.6){\black{$y$}}
\rput[h](-2.1,1){\blue$v_e(F(0))$}
\rput[h](-.7,4.8){\blue$v_e(F(1))$}
\rput[h](.75,4.82){$2+2\sqrt{2}$}
\psdot[linewidth=.5pt](-1.414,0)
\psdot[linewidth=1pt,linecolor=blue](-1.414,1)
\rput[h](-1.55,-.3){$-\sqrt{2}$}
\psdot[linewidth=1pt](0,4.82)
\psline[linestyle=dashed,linewidth=.7pt](-1.414,0)(-1.414,1)
\psline[linestyle=dashed,linewidth=.7pt](-1.414,1)(0,1)
\psdot[linewidth=1pt,linecolor=blue](0,4.82)
\end{pspicture}\end{center}
\caption{Image set of $v_e(F(x))$}\label{fig15}
\end{figure}

Also, there isn't any $x\in(0,1]$ such that $$v_e(F(x))\leq_{\R^2_+} v_e(F(0)).$$ So, $x_0=0$ is the minimal solution of $(VOP_v^s)$. Therefore, $x_0=0$ is a solution of $(s-SOP)$ by Theorem \ref{th1}.
\end{example}

Now, we construct Gerstewitz vectorization for a nonconvex $(s-SOP)$.

\begin{example}
  Let $Y=\R^2$, $C=\R^2_+$ and $F:[0,2]\rightrightarrows Y$ be defined as  $$F(x):=\left\{
                                     \begin{array}{ll}
                                       \left([x-2,x]\times[x-2,x]\right)\cup\{(6+x,6+x)\} & ;x\in[0,2) \\
                                       \mathrm{conv\{(-5,0),(6,6)\}\cup conv\{(0,-5),(6,6)\}} & ;x=2. \\
                                     \end{array}
                                   \right.$$

Consider the problem
$$(s-SOP)\left\{ \begin{array}{ll}
\min F(x) & \\
 s.t. \ x\in [0,2]. &
\end{array} \right.$$

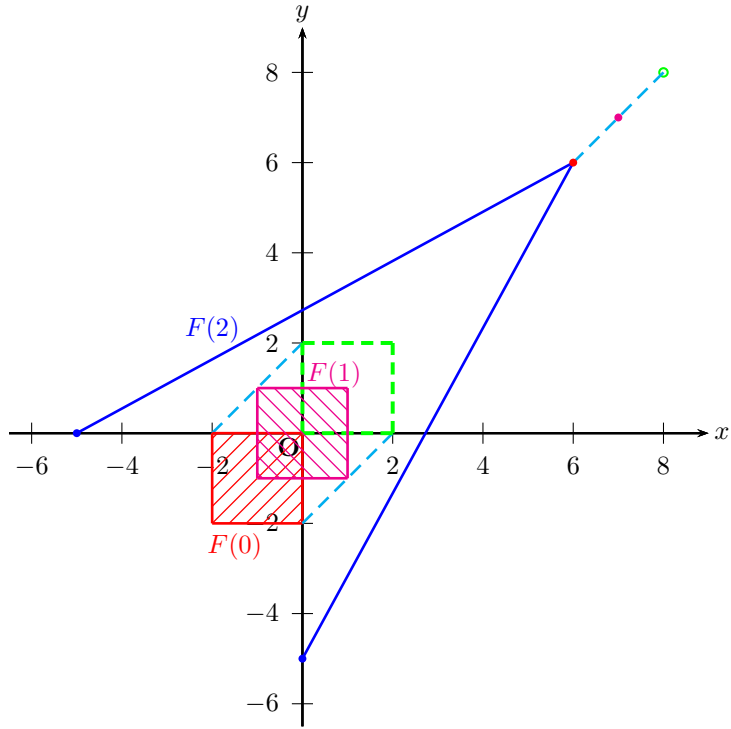
\begin{figure}[h]
\begin{center}
\psset{Dy=2,Dx=2,xunit=6mm,yunit=6mm}\begin{pspicture}(-6.5,-6.5)(9,9)
\psaxes[arrows=->](0,0)(-6.5,-6.5)(9,9)
\rput[h](-0.3,-0.3){\black{{\textbf{O}}}}
\rput[h](9.3,0){\black{$x$}}
\rput[h](0,9.3){\black{$y$}}
\psline[showpoints=false,linewidth=1pt,fillstyle=vlines,linecolor=blue](-5,0)(6,6)
\psline[showpoints=false,linewidth=1pt,fillstyle=vlines,linecolor=blue](0,-5)(6,6)
\rput[h](-2,2.3){\blue$F(2)$}
\pscustom[linecolor=white,linewidth=0.001pt,fillstyle=hlines,ticks=none,hatchwidth=.5pt,hatchcolor=red]{
\psline(-2,-2)(0,-2)
\psline(0,-2)(0,0)
\psline(0,0)(-2,0)
\psline(-2,0)(-2,-2)}
\psline[showpoints=false,linewidth=1pt,fillstyle=vlines,linecolor=red](-2,-2)(0,-2)
\psline[showpoints=false,linewidth=1pt,fillstyle=vlines,linecolor=red](0,-2)(0,0)
\psline[linestyle=dashed,linewidth=1pt,linecolor=cyan](-2,0)(0,2)
\psline[linestyle=dashed,linewidth=1pt,linecolor=cyan](0,-2)(2,0)
\psline[linestyle=dashed,linewidth=1.5pt,linecolor=green](2,0)(2,2)
\psline[linestyle=dashed,linewidth=1.5pt,linecolor=green](0,2)(2,2)
\psline[linestyle=dashed,linewidth=1.5pt,linecolor=green](2,0)(0,0)
\psline[linestyle=dashed,linewidth=1.5pt,linecolor=green](0,2)(0,0)
\psline[showpoints=false,linewidth=1pt,fillstyle=vlines,linecolor=red](0,0)(-2,0)
\psline[showpoints=false,linewidth=1pt,fillstyle=vlines,linecolor=red](-2,0)(-2,-2)
\rput[h](-1.5,-2.5){\red$F(0)$}
\pscustom[linecolor=white,linewidth=0.001pt,fillstyle=vlines,ticks=none,hatchwidth=.5pt,hatchcolor=magenta]{
\psline(-1,-1)(1,-1)
\psline(1,-1)(1,1)
\psline(1,1)(-1,1)
\psline(-1,1)(-1,-1)}
\psline[showpoints=false,linewidth=1pt,fillstyle=vlines,linecolor=magenta](-1,-1)(1,-1)
\pscircle[linecolor=green,fillstyle=solid,hatchwidth=.5pt,fillcolor=white,hatchcolor=white](8,8){.07}
\psline[linestyle=dashed,linewidth=1pt,linecolor=cyan](6,6)(8,8)
\psline[showpoints=false,linewidth=1pt,fillstyle=vlines,linecolor=magenta](1,-1)(1,1)
\psline[showpoints=false,linewidth=1pt,fillstyle=vlines,linecolor=magenta](1,1)(-1,1)
\psline[showpoints=false,linewidth=1pt,fillstyle=vlines,linecolor=magenta](-1,1)(-1,-1)
\rput[h](.7,1.3){\magenta$F(1)$}
\psdot[linewidth=.5pt,linecolor=blue](-5,0)
\psdot[linewidth=.5pt,linecolor=blue](0,-5)
\psdot[linewidth=.5pt,linecolor=magenta](7,7)
\psdot[linewidth=.5pt,linecolor=red](6,6)
\end{pspicture}\end{center}
\caption{Some image sets of $F$}
\label{fig}
\end{figure}

 Some image sets of $F$ are given in Fig. \ref{fig}. Since $F(x)\not\preceq^{\ell}F(2)$, we have \linebreak $F(x)\not\preceq^sF(2)$ for all $x\in[0,2)$. Then, $x_0=2$ is an $s$-minimal solution of $(s-SOP)$. As $F(x)\not\preceq^{\ell}F(0)$, we get $F(x)\not\preceq^sF(0)$ for all $x\in(0,2]$. Hence, $x_0=0$ is an \linebreak $s$-minimal solution of $(s-SOP)$. Let us choose $x\in(0,2)$. We get $F(0)\preceq^sF(x)$ and $F(x)\not\preceq^sF(0)$. So, $x$ isn't an $s$-minimal solution of $(s-SOP)$. Therefore, solutions of $(s-SOP)$ are 0 and 2.

Since $F(2)+C$ isn't convex, vectorization in \cite{Jahn2} couldn't be applied to this problem. But, we can solve this problem via Gerstewitz vectorization.

Now, we show that $x_0=2$ is a solution of this problem by using Theorem \ref{u110}.

Let us choose $e=(-1,-1)$ and consider the problem

$$
(VOP_w^s)
 \left\{ \begin{array}{ll}
\max w_e(F(x),F(2)) & \\
 s.t. \ x\in [0,2]. &
\end{array} \right.$$

 We get
$$\begin{array}{ll}
   w_e(F(x),F(2)) & =(-G_e^{\ell}(F(x),F(2)),G_e^u(F(2),F(x))) \\
   \ \\
    &  =\left\{\begin{array}{cc}
    (-3-x,-x) & ;x\neq2 \\
    (0,0) & ;x=2.
    \end{array}\right.\end{array}$$

     \begin{figure}[h]
\begin{center}
\begin{pspicture}(-6,-3)(2.1,2.1)
\psaxes[Dx=3,Dy=4,arrows=->](0,0)(-5.98,-3)(2,2)
\rput[h](-0.3,-0.3){\black{{O}}}
\psline[showpoints=false,linewidth=1pt,fillstyle=vlines,linecolor=blue](-3,0)(-5,-2)
\rput[h](2.2,0){\black{$x$}}
\rput[h](0,2.2){\black{$y$}}
\rput[h](-3,.5){\blue$w_e(F(0),F(2))$}
\rput[h](1.5,.5){\red$w_e(F(2),F(2))$}
\rput[h](.4,-2){-2}
\rput[h](-5.2,-.4){-5}
\psdot[linewidth=1pt,linecolor=red](0,0)
\psdot[linewidth=1pt](-5,0)
\psdot[linewidth=1pt](0,-2)
\psline[linestyle=dotted](-5,-2)(-5,0)
\psline[linestyle=dotted](-5,-2)(0,-2)
\psdot[linewidth=1pt,linecolor=blue](-3,0)
\pscircle[linecolor=blue,fillstyle=solid,hatchwidth=.5pt,fillcolor=white,hatchcolor=white](-5,-2){.07}
\end{pspicture}\end{center}
\caption{Image set of $w_e(F(x),F(2))$}\label{fig1}
\end{figure}
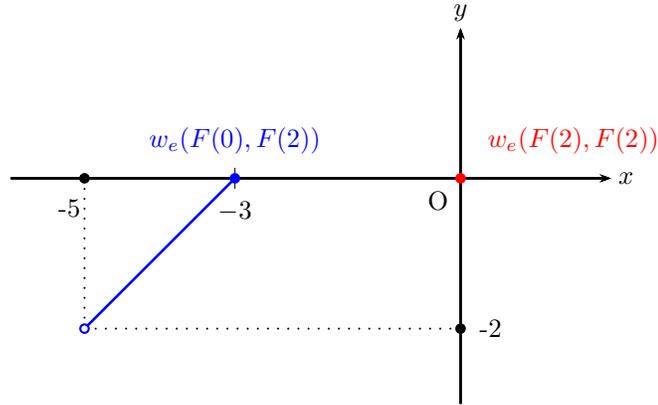

As seen in Fig. \ref{fig1} there isn't any $x\in[0,2)$ such that $$w_e(F(2),F(2))\leq_{\R^2_+} w_e(F(x),F(2)).$$ So, $x_0=2$ is the solution of $(VOP_w^s)$. Therefore, $x_0=2$ is a solution of $(s-SOP)$ by Theorem \ref{u110}.

$x_0=0$ is also a solution of $(s-SOP)$. It can be shown similarly via Gerstewitz vectorization.

\end{example}

\newpage
\section*{Conclusion}
In this study, our aim is to replace a nonconvex set-valued optimization problem with respect to set less order relation with a vector optimization problem via Gerstewitz vectorizing function. This can provide us to use known solution techniques such as scalarization, duality, derivative etc. in vector optimization to solve nonconvex set-valued optimization problems. For further studies, one can investigate the usage of these techniques in set-valued optimization via different vectorizations.

% Non-BibTeX users please use

\end{document}